\newcommand{\Z}{\ensuremath{\mathbb{Z}}}
\newcommand{\Q}{\ensuremath{\mathbb{Q}}}
\newcommand{\Hom}{Hom}
\newcommand{\Ext}{Ext} %---- Ext yazmak icin
\newcommand{\Soc}{Soc}
\newcommand{\Tor}{Tor}
\newtheorem{thm}{Theorem}
\newtheorem{corollary}[thm]{Corollary}
\newtheorem{defi}{Definition}
\newtheorem{lem}[thm]{Lemma}
\newtheorem{proposition}[thm]{Proposition}
\newtheorem{remark}[thm]{Remark}
\date{}
\begin{document}

\author{Y{\i}lmaz Dur\u{g}un}
\address{\c{C}ukurova University, Department of Mathematics, 01330, Adana, Turkey.}
\email{ydurgun@cu.edu.tr}

\title{On subinjectivity domains of pure-injective modules}

\begin{abstract}
As an alternative perspective
on the injectivity of a pure-injective module, a pure-injective module $M$ is
said to be pi-indigent if its subinjectivity domain is smallest
possible, namely, consisting of exactly the absolutely pure modules.
A module $M$ is called subinjective relative to a module $N$ if for every
extension $K$ of $N $, every homomorphism $N \to M$ can be extended to a homomorphism
$K \to M$. The subinjectivity domain of the module $M$ is defined to be the class of
modules $N$ such that $M$ is $N$-subinjective.
Basic properties of
the subinjectivity domains of pure-injective modules and of pi-indigent modules  are studied.
The structure of a ring over which every pure-injective (resp. simple, uniform, indecomposable)  module is injective or subinjective relative only to the smallest possible family of modules
 is investigated.
This work is a natural continuation to recent papers that
have embraced the systematic study of the subinjective and  subprojective domains of modules.

\end{abstract}
\subjclass[2010]{18G25; 16D50; 16D70}
\keywords{pure-injective module; subinjective domain; pi-indigent module; absolutely pure module}

\date{\today}
\maketitle
\section{Introduction} \label{section1}

Throughout, $R$ will denote an associative ring with identity.
As usual, we denote by $Mod-R$ the category of right $R$-modules.
Some recent work in module theory has focused on classical injectivity in order to consider the extent of injectivity of modules from a fresh perspective.
Recall
that a module $M$ is said to be $N$-subinjective if for every extension $K$ of $N$ and every
homomorphism $f: N \to M$ there exists a homomorphism $h: K \to M$
such that $h|_{N} = f$.  For a module $M$, the subinjectivity domain
of $M$, $\underline{\mathfrak{In}}^{-1}(M)$, is defined to be the
collection of all modules $N$ such that $M$ is $N$-subinjective,
that is $\underline{\mathfrak{In}}^{-1}(M)=\{N\in Mod-R|$ M is
N-subinjective$\}$ (see \cite{pinar}). It is clear that a module $M$ is injective if
and only if $\underline{\mathfrak{In}}^{-1}(M)=Mod-R$. If $N$ is
injective, then $M$ is $N$-subinjective. So, the smallest
possible subinjectivity domain is the class of injective modules.
While traditionally the study of non-injective modules has emphasized those modules that are as injective as possible, the recent
 has been made to understand also the diametrical opposite notion of modules which are subinjective only with respect to the smallest possible class of modules, i.e. the class of injective modules.
 One of the notions thus introduced is that of an indigent module (see \cite{pinar}). Presently, it is not known whether indigent module exists for an arbitrary ring, but an affirmative answer is known for
 some rings, such as Noetherian rings \cite{f-indigent}. In \cite{noyanenginalizade}, the authors studied ring $R$ with the property that  every non-injective right module is indigent.
 Indigent modules have been recently studied  in \cite{on-indigent}.
 In \cite{pure-injective-indigent}, inspired
 by the notion of relative subinjectivity, pure-subinjectivity domain of a module is introduced and studied.
A module $M$ is said to be $N$-pure-subinjective if for every pure extension $K$ of $N$ and every
 homomorphism $f: N \to M$ there exists a homomorphism $h: K \to M$
 such that $h|_{N} = f$. The pure-subinjectivity domain of $M$ consists of those modules $N$ such that $M$ is $N$-pure-subinjective.

In this paper, motivated by these above-mentioned papers, we address some questions on pure-injective modules. The first question discussed here is: What is the structure of a ring  $R$ with the property that  every non-injective  pure injective left module is subinjective relative only to
the smallest possible class of modules?
To approach this problem, we use the following observation: The
smallest possible subinjectivity domain of a pure injective module is the class of absolutely pure (fp-injective) modules. To keep
in line with \cite{pinar}, we refer to these pure injective modules as pure injectively indigent, or pi-indigent for short. In contrast to indigent modules, such pure injective modules exist over any ring (Proposition \ref{existness}). We prove that if  such a ring is two-sided coherent then it is either two-sided semihereditary or right IF-ring (Proposition \ref{every module is flat or indigent}). We also prove that if  such a ring is left noetherian left nonsingular then every non-flat left $R$-module is i-test and  $R$ is left n-saturated ring (Lemma \ref{theorem 5}).

Second, we study the structure of a ring  $R$ with the property that  every non-injective pure injective right module is indigent.  We prove that if  such a ring is right noetherian then it is isomorphic to the direct product of a
semisimple Artinian ring and an indecomposable ring $A$ such that (i)  $A$ is right n-saturated matrix ring over local $QF$-ring; or, (ii) $A$ is hereditary Artinian serial ring with $J(A)^2=0$; or, (iii) $A$ is $SI$-ring with $\Soc(_{A}A)=0$ (Theorem \ref{MAiN THEOREM}).

Finally, we study the structure of a ring  $R$ with the property that  every non-injective simple right module is  indigent.  In \cite{noyanenginalizade},  Alizade, B{\"u}y{\"u}ka{\c{s}}ik and Er also investigate when non injective simple modules are indigents.  We improve their results proving that every non-injective simple right $R$-module is indigent  if and only if
 (i) $R$ is a right V-ring; or,
(ii)$R$ is right Hereditary righ Noetherian ring and, for any right $R$-module $M$, either $M$ is indigent or $Soc(M)=Soc(N)$, where $N$ is the  largest injective submodule of $M$; or,
(iii)$R\cong S\times T$, where $S$ is semisimple Artinian ring and $T$ is an indecomposable matrix ring over a local QF-ring (Theorem \ref{every simple is indigent}). We show that, for a right nonsingular ring which is not  right V-ring, every non-injective simple right $R$-module is indigent if and only if  every non-injective uniform (or indecomposable) right $R$-module with nonzero socle is indigent (Proposition \ref{every indecomposable is indigent}).
Moreover, we show that, for commutative nonsingular ring which is not V-ring, every non-injective simple module is indigent if and only if  every non-injective singular module is indigent (Proposition \ref{every singular is indigent}).

For a module $M$, $E(M)$, $Soc(M)$, and $Z(M)$ will stand for the injective hull, the socle and the singular submodule respectively.
 For any ring in our
discussion, $J$ will stand for the Jacobson radical of that ring.
For a module $M$, the character module $\Hom_{\Z}(M, \Q/\Z)$ is
denoted by $M^{+}$. For other concepts and problems not mentioned here,
For  other concepts or background
materials, we refer the reader to \cite{Rotman:HomologicalAlgebra, Relativehomologicalalgebra}.\

\section{The subinjectivity domain of a pure-injective module}\label{sec:basic properties}

The notion of purity has an significant role in module theory and model theory
since it was presented in the literature (see \cite{warfield-compactness, cohn-free}). There are several
generalizations of the notion of purity(see \cite{ simmons-cpure, warfield-compactness, puninsk-ring}).  For a survey on purities, we refer the reader to \cite{sklyarenko}.

Let $\delta$ be a class of right $R$-modules.
An exact sequence  $0\to A\to B\to C\to 0$  a sequence of left
$R$-modules is called $\delta$-pure,  if every member of $\delta$ is flat with respect to this sequence;
i.e., the induced homomorphism $M \otimes B\to M \otimes C$ is a monomorphism for
each $M \in \delta$. A submodule $A$ of an $R$-module $B$ is called a $\delta$-pure submodule if the
exact sequence $0\to A\to B\to C\to 0$ is $\delta$-pure. An $R$-module $M$ is said to
be $\delta$-pure injective (resp. projective) if $M$ has the injective (resp. projective) property relative to each $\delta$-pure exact
sequence. An $R$-module $N$ is said to
be absolutely $\delta$-pure (resp. $\delta$-flat) if every exact sequence starting (resp. ending) with $N$ is  $\delta$-pure. If  $\delta=\{M \}$, we say $M$-pure instead of $\delta$-pure.

If we take for $\delta$ the class of all (or even only finitely presented) right $R$-modules, then we get the classical purity, which is usualy called the Cohns purity. In this
case, $\delta$-pure exact, $\delta$-pure injective, $\delta$-pure projective, absolutely $\delta$-pure and $\delta$-flat are commonly called pure exact, pure injective, pure projective, absolutely pure (or fp-injective) and flat, respectively.

The author in \cite{pinar} show for a module $M$ to be $ N$-subinjective, one only needs to extend maps to $E(N)$. We have improved this result as follows.

\begin{lem}\label{pure subinjectivity domains}Let $M, N \in Mod - R$. Then the following conditions are equivalent.

\begin{enumerate}
\item [(1)]$M$ is $N$-subinjective.
\item[(2)] For every $f : N \to M$ and every monomorphism $g : N \to F$ with $F\in\underline{\mathfrak{In}}^{-1}(M)$, there
exists $h : F \to M$ such that $hg = f $.
\item[(3)] There is a monomorphism $g : N \to F$ with $F\in\underline{\mathfrak{In}}^{-1}(M)$, such that for every
$f : N \to M$, there exists $h : F \to M$ such that $hg = f $.
\end{enumerate}
\end{lem}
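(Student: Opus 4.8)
The plan is to prove the cycle $(1)\Rightarrow(2)\Rightarrow(3)\Rightarrow(1)$, where the first two implications are essentially formal and only $(3)\Rightarrow(1)$ carries content. For $(1)\Rightarrow(2)$, I observe that any monomorphism $g:N\to F$ presents $F$ as an extension of $N$ (after identifying $N$ with $g(N)$), so the $N$-subinjectivity of $M$ applied to this extension yields, for each $f:N\to M$, a map $h:F\to M$ with $hg=f$; notice this does not even use $F\in\underline{\mathfrak{In}}^{-1}(M)$. For $(2)\Rightarrow(3)$, I only need to produce one admissible pair $(g,F)$. The inclusion $N\hookrightarrow E(N)$ works: every injective module lies in $\underline{\mathfrak{In}}^{-1}(M)$ for every $M$ (if $N$ is injective then $M$ is automatically $N$-subinjective), so $E(N)\in\underline{\mathfrak{In}}^{-1}(M)$, and $(2)$ applied to this particular monomorphism gives $(3)$.

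The substance is $(3)\Rightarrow(1)$. I fix the witnessing monomorphism $g:N\to F$ with $F\in\underline{\mathfrak{In}}^{-1}(M)$, take an arbitrary extension $K$ of $N$ with inclusion $\iota:N\to K$, and an arbitrary $f:N\to M$. By $(3)$ there is $h:F\to M$ with $hg=f$. I then form the pushout $P$ of $g$ and $\iota$, with structure maps $\alpha:F\to P$ and $\beta:K\to P$ satisfying $\alpha g=\beta\iota$. Because $\iota$ is a monomorphism, the pushout map $\alpha$ is again a monomorphism. Since $F\in\underline{\mathfrak{In}}^{-1}(M)$, the module $M$ is $F$-subinjective, so applying this to the monomorphism $\alpha$ and the map $h$ produces $\tilde h:P\to M$ with $\tilde h\alpha=h$. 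Setting $h':=\tilde h\beta:K\to M$, the pushout identity gives $h'\iota=\tilde h\beta\iota=\tilde h\alpha g=hg=f$, so $h'$ extends $f$ to $K$. As $K$ and $f$ were arbitrary, $M$ is $N$-subinjective.

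I expect the only real obstacle to be the passage in $(3)\Rightarrow(1)$ from extendability across the single map $g$ to extendability across every extension $K$. The pushout is precisely the device that relays the extension problem onto the $F$-side, where the hypothesis $F\in\underline{\mathfrak{In}}^{-1}(M)$ becomes usable; the one point requiring care is that $\alpha$ stays a monomorphism (pushouts preserve monomorphisms in $Mod-R$), which is what makes $F$-subinjectivity applicable, while the commutativity $\alpha g=\beta\iota$ of the pushout square makes the final restriction computation automatic. The improvement over the injective-hull version of \cite{pinar} is exactly that $F$ is allowed to be any member of $\underline{\mathfrak{In}}^{-1}(M)$, and the argument above never needs $F$ to be injective—only $F$-subinjective for $M$.
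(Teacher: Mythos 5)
Your proof is correct, and the heart of it --- the implication $(3)\Rightarrow(1)$ --- runs along a genuinely different route than the paper's. The paper leans on the injective-hull criterion of Aydo\u{g}du and L\'opez-Permouth (\cite{pinar}, Lemma 2.2) at every stage: $(1)\Rightarrow(2)$ and $(2)\Rightarrow(3)$ are quoted from it, and for $(3)\Rightarrow(1)$ the paper composes $N\overset{g}{\to}F\overset{\iota}{\to}E(F)$, extends $h$ to a map $s:E(F)\to M$ using $F$-subinjectivity applied to the honest inclusion $F\subseteq E(F)$, and then invokes that same lemma once more, since every $f:N\to M$ now factors as $s\iota g$ through the injective module $E(F)$. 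You instead verify the definition of $N$-subinjectivity directly: given an arbitrary extension $\iota:N\to K$, you push out along $g$, use that the pushout of the monomorphism $\iota$ yields a monomorphism $\alpha:F\to P$ (true in $Mod-R$), apply $F$-subinjectivity across $\alpha$, and restrict back along $\beta$. Two remarks on the comparison. First, your argument is self-contained --- it never routes through an injective hull in the key step, whereas the paper's proof is shorter but only because the real work is delegated to the cited lemma; your $(1)\Rightarrow(2)$ (identify $F$ with an extension of $g(N)\cong N$) and $(2)\Rightarrow(3)$ (take $F=E(N)$, which lies in every subinjectivity domain) reprove exactly the content being quoted. Second, one small point of care in your $(3)\Rightarrow(1)$: applying $F$-subinjectivity to the monomorphism $\alpha$ rather than to a literal inclusion requires identifying $F$ with $\alpha(F)$; this is precisely the ``$(1)\Rightarrow(2)$ without the domain hypothesis'' fact, which you establish independently earlier, so there is no circularity --- but it is worth flagging explicitly, since the paper sidesteps the issue by always extending along genuine inclusions $F\subseteq E(F)$.
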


\begin{proof}
 $(1) \Rightarrow (2)$ and $(2) \Rightarrow (3)$ follows by \cite[Lemma 2.2]{pinar}. $(3) \Rightarrow (1)$ Let $f : N\to M$ be a homomorphism. By assumption, there is a monomorphism $g : N \to F$ with $F\in\underline{\mathfrak{In}}^{-1}(M)$ and a homomorphism $h : F \to M$ such that $hg = f $.

$$
\begin{xy}
   \xymatrix{
   0\ar[r] & N \ar[d]^{f}\ar[r]^{g} &   F\ar[d]^{\iota}\ar@{-->}[ld]^{h}   \\
    &M&   E(F)\ar[l]^{s}   }
\end{xy}
$$
Since $M$ is $F$-subinjective, there exists a homomorphism $s : E(F) \to M$ such that $s\iota=h$. It is clear that $\iota g$ is a monomorphism and $f=s\iota g$. Then $M$ is $N$-subinjective by \cite[Lemma 2.2]{pinar}.
\end{proof}

 The character module $N^{+}$ of  a right $R$-module $N$ is a pure injective left $R$-module \cite[Proposition 5.3.7]{Relativehomologicalalgebra}.
It is known that $(M\otimes N)^{+}\cong\Hom(N, M^{+})$ by the
 adjoint isomorphism, (see \cite{Rotman:HomologicalAlgebra}). Then, the following obsevation is clear.

\begin{proposition}\label{bycharacter} Let $M$ be a right $R$-module and $N$ a left $R$-module.
Then, $N$ is absolutely $M$-pure if and only if $M^{+}$ is $N$-subinjective.
\end{proposition}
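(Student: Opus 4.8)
The plan is to reduce both sides of the equivalence to a single condition on the map $1_M \otimes g : M \otimes N \to M \otimes K$ attached to an extension $g : N \hookrightarrow K$, and then to pass back and forth using the adjoint isomorphism quoted just above the statement together with the fact that $\Q/\Z$ is an injective cogenerator of the category of abelian groups. The whole argument is a chain of biconditionals indexed by the extensions $K$ of $N$.

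First I would unwind the right-hand condition. Saying that $M^{+}$ is $N$-subinjective means precisely that for every extension $g : N \hookrightarrow K$ the restriction map $g^{\ast} : \Hom_R(K, M^{+}) \to \Hom_R(N, M^{+})$ is surjective, since surjectivity of $g^{\ast}$ is exactly the assertion that every $f : N \to M^{+}$ lifts along $g$. Now I would invoke the naturality of the adjunction $\Hom_R(-, M^{+}) \cong (M \otimes -)^{+}$ in its first variable: for the fixed monomorphism $g$, the square whose top row is $g^{\ast}$, whose bottom row is $(1_M \otimes g)^{+} : (M \otimes K)^{+} \to (M \otimes N)^{+}$, and whose verticals are the adjunction isomorphisms, commutes. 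Hence $g^{\ast}$ is surjective if and only if $(1_M \otimes g)^{+}$ is surjective.

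The next step is purely homological. Because $\Q/\Z$ is an injective cogenerator of $\Z$-$Mod$, a homomorphism $\phi$ of abelian groups is a monomorphism if and only if its character map $\phi^{+}$ is an epimorphism; applying this to $\phi = 1_M \otimes g$ shows that $(1_M \otimes g)^{+}$ is surjective exactly when $1_M \otimes g : M \otimes N \to M \otimes K$ is injective. It remains to match this with the left-hand condition: $N$ is absolutely $M$-pure precisely when every exact sequence $0 \to N \xrightarrow{g} K \to K/N \to 0$ is $M$-pure, that is, when $1_M \otimes g$ is a monomorphism for every extension $K$ of $N$. Chaining the three equivalences and quantifying over all extensions $g$ then yields the proposition.

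I do not expect a genuine obstacle here; the only point deserving care is the commutativity of the naturality square, namely the claim that under the adjunction the restriction map $g^{\ast}$ corresponds to $(1_M \otimes g)^{+}$ and not to some other map. This is the standard functoriality of the tensor-hom adjunction in the module variable, and if needed I would confirm it by a direct chase: an element $\psi \in \Hom_R(K, M^{+})$ corresponds to $\theta_\psi \in (M \otimes K)^{+}$ with $\theta_\psi(x \otimes k) = \psi(k)(x)$, and precomposing $\psi$ with $g$ matches precomposing $\theta_\psi$ with $1_M \otimes g$.
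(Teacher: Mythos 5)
Your proof is correct and takes essentially the same route the paper intends: the paper merely cites the adjoint isomorphism $(M\otimes N)^{+}\cong\Hom(N,M^{+})$ and declares the proposition clear, and your argument supplies exactly the omitted verification. The two ingredients you make explicit --- naturality of the adjunction in the module variable, and the fact that $\Q/\Z$ is an injective cogenerator (so $\phi$ is monic iff $\phi^{+}$ is epic) --- are precisely what is needed to turn that isomorphism into the stated equivalence.
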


Note that  a pure injective module $M$ is injective if
and only if $\underline{\mathfrak{In}}^{-1}(M)=Mod-R$. If $N$ is
absolutely pure, then $M$ is vacuously $N$-subinjective. So, the smallest
possible subinjectivity domain of a pure injective module is the class of absolutely pure modules.

\begin{proposition}\label{allmodules} Let $\textit{PI}$ be a class of all pure injective left $R$-modules. Then,  $\bigcap_{M\in \textit{PI}}\underline{\mathfrak{In}}^{-1}(M)=\{N\in R-Mod|N \text{ is absolutely pure}\}$
\end{proposition}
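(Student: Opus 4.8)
The plan is to prove the asserted equality by establishing the two inclusions separately. Write $\mathcal{A}$ for the class of absolutely pure left $R$-modules and $\mathcal{D}=\bigcap_{M\in \textit{PI}}\underline{\mathfrak{In}}^{-1}(M)$, so that the goal is $\mathcal{D}=\mathcal{A}$. The inclusion $\mathcal{A}\subseteq\mathcal{D}$ is essentially the remark immediately preceding the statement, upgraded to a quantifier over $\textit{PI}$, while $\mathcal{D}\subseteq\mathcal{A}$ is the step with actual content.

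For $\mathcal{A}\subseteq\mathcal{D}$, fix an absolutely pure left module $N$ and an arbitrary pure injective left module $M$. Given any extension $N\hookrightarrow K$, the short exact sequence $0\to N\to K\to K/N\to 0$ is pure, since $N$ absolutely pure means every exact sequence starting with $N$ is pure. Because $M$ is pure injective, it enjoys the injective property relative to this pure exact sequence, so every $f:N\to M$ extends along the inclusion to a map $K\to M$. Hence $M$ is $N$-subinjective, i.e. $N\in\underline{\mathfrak{In}}^{-1}(M)$; as $M\in\textit{PI}$ was arbitrary, $N\in\mathcal{D}$.

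For the reverse inclusion $\mathcal{D}\subseteq\mathcal{A}$, the idea is to feed the right family of pure injective test modules into the hypothesis and then quote Proposition \ref{bycharacter}. Let $N\in\mathcal{D}$. For each right $R$-module $K$, its character module $K^{+}$ is a pure injective left $R$-module by \cite[Proposition 5.3.7]{Relativehomologicalalgebra}, so $K^{+}\in\textit{PI}$ and therefore $K^{+}$ is $N$-subinjective by definition of $\mathcal{D}$. Applying Proposition \ref{bycharacter} with the right module $K$ in the role of $M$ gives that $N$ is absolutely $K$-pure. Letting $K$ range over all right $R$-modules, $N$ is absolutely $K$-pure for every right $R$-module $K$; unwinding the definitions, this says exactly that every exact sequence starting with $N$ is pure in the classical sense, which is the definition of $N$ being absolutely pure. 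Thus $N\in\mathcal{A}$, completing the argument.

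The substantive step is this reverse inclusion, and its crux is the observation that the character modules $K^{+}$ already exhaust enough of $\textit{PI}$ to detect absolute purity, so one never needs membership of $N$ in the full intersection beyond these particular pure injectives. The two points requiring care are (a) the quoted fact that every character module is pure injective, and (b) recognizing that the "absolutely $\{K\}$-pure for every $K$" conclusion supplied by Proposition \ref{bycharacter} coincides with the classical notion of absolutely pure — which is immediate once one recalls that classical (Cohn) purity is purity tested against the class of all right $R$-modules. No compactness or set-theoretic subtlety intervenes, precisely because a character module is available for every single right $R$-module $K$.
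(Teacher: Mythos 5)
Your proof is correct and takes essentially the same approach as the paper's: the easy inclusion is the vacuous observation that pure injective modules extend maps from absolutely pure submodules, and the substantive inclusion feeds character modules (which are pure injective) into Proposition \ref{bycharacter}. The only, immaterial, difference is that you test against $K^{+}$ for \emph{all} right $R$-modules $K$, whereas the paper uses only the character modules of finitely presented right modules and then invokes the fact that classical purity is already detected by finitely presented modules.
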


\begin{proof}
Let $K\in\bigcap_{M\in \textit{PI}}\underline{\mathfrak{In}}^{-1}(M)$. In particular, for each finitely presented right $R$-module $M$, $M^{+}$ is $K$-subinjective. By  the preceding proposition, $K$ is absolutely $M$-pure for each finitely presented right $R$-module $M$, and hence $K$ is absolutely pure module. The converse folows from the discussion in the preceding paragraph.
\end{proof}

As subinjectivity domains of pure-injective modules
clearly include all absolutely pure modules, a reasonable opposite to injectivity of pure-injective modules in this context is obtained by
considering pure-injective modules whose subinjectivity domain consists of only absolutely pure modules.
\begin{defi}We will call a pure-injective module $M$
 pure injectively indigent (or a pi-indigent) module in case
$\underline{\mathfrak{In}}^{-1}(M)=\{A\in Mod-R | \text{A is absolutely pure }\}$.
\end{defi}

Certainly, the first problem that comes to mind with the
introduction of the notion of pi-indigent modules is whether such pure injective modules
exist over all rings. For the remainder of this paper, let $\mathfrak{PI}:=\prod_{S_{i}\in\Gamma}S_{i}^{+}$,
where  $\Gamma$ be a complete set of representatives of finitely
presented left $R$-modules.

\begin{proposition}\label{existness}
$\mathfrak{PI}$ is a pi-indigent right $R$-module.
\end{proposition}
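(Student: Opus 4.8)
The plan is to verify the two defining requirements of a pi-indigent module separately: first that $\mathfrak{PI}$ is pure-injective, and then that its subinjectivity domain is exactly the class of absolutely pure right $R$-modules. Pure-injectivity is the easy half. Each factor $S_{i}^{+}$ is the character module of a finitely presented left $R$-module, hence a pure-injective right $R$-module by the left-right dual of \cite[Proposition 5.3.7]{Relativehomologicalalgebra}. Since an arbitrary direct product of pure-injective modules is again pure-injective, $\mathfrak{PI}=\prod_{S_{i}\in\Gamma}S_{i}^{+}$ is pure-injective.

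The substance is the computation of $\underline{\mathfrak{In}}^{-1}(\mathfrak{PI})$. First I would record the general fact that subinjectivity relative to a fixed source module passes through arbitrary products in the target: for any family $\{M_{i}\}$ and any module $N$, the product $\prod_{i}M_{i}$ is $N$-subinjective if and only if each $M_{i}$ is $N$-subinjective. The forward direction follows by composing a given map $g:N\to M_{j}$ with the coordinate injection (place $g$ in the $j$-th coordinate and $0$ elsewhere), extending the resulting map $N\to\prod_{i}M_{i}$, and then projecting back to $M_{j}$; the backward direction is immediate, since a map $N\to\prod_{i}M_{i}$ is a tuple of coordinate maps, each of which extends and whose extensions reassemble into an extension of the original. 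Consequently $N\in\underline{\mathfrak{In}}^{-1}(\mathfrak{PI})$ if and only if $S_{i}^{+}$ is $N$-subinjective for every $S_{i}\in\Gamma$.

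Next I would invoke the left-right dual of Proposition \ref{bycharacter}: for a left $R$-module $S_{i}$ and a right $R$-module $N$, the module $S_{i}^{+}$ is $N$-subinjective if and only if $N$ is absolutely $S_{i}$-pure. Combining this with the previous step, $N\in\underline{\mathfrak{In}}^{-1}(\mathfrak{PI})$ if and only if $N$ is absolutely $S_{i}$-pure for every finitely presented left $R$-module $S_{i}$. Finally, unwinding the definitions, $N$ is absolutely $S_{i}$-pure for all finitely presented $S_{i}$ precisely when every short exact sequence starting with $N$ is pure in Cohn's sense, i.e. precisely when $N$ is absolutely pure (fp-injective); here I use that Cohn purity is exactly $\delta$-purity for $\delta$ the class of all finitely presented modules, and that $\Gamma$ runs over a complete set of representatives of these. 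This identifies $\underline{\mathfrak{In}}^{-1}(\mathfrak{PI})$ with the class of absolutely pure modules, which is the assertion.

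The main obstacle I anticipate is purely bookkeeping: the $S_{i}$ are left modules whereas $\mathfrak{PI}$ and the test modules $N$ are right modules, so I must apply Proposition \ref{bycharacter} in its correctly dualized form and keep the sides straight throughout. The product-permanence step and the final translation from ``absolutely $S_{i}$-pure for each finitely presented $S_{i}$'' to ``absolutely pure'' are formal once the definitions are aligned, so I would concentrate the care on the side-bookkeeping rather than on any genuine difficulty.
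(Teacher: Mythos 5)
Your proposal is correct and follows essentially the same route as the paper: reduce membership in $\underline{\mathfrak{In}}^{-1}(\mathfrak{PI})$ to $N$-subinjectivity of each factor $S_i^{+}$, translate that via (the left-right dual of) Proposition~\ref{bycharacter} into $N$ being absolutely $S_i$-pure for every finitely presented $S_i$, and identify this with absolute purity. The only difference is one of explicitness: you prove the product-to-factor equivalence and the final quantifier-unwinding by hand, whereas the paper handles these by citing Lemma~\ref{pure subinjectivity domains} and \cite[6.2.3]{Relativehomologicalalgebra}, respectively.
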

\begin{proof} A right $R$-module $N$ is absolutely pure if and only if $N\otimes M\to E(N)\otimes M$ is a monomorphism for each finitely presented left $R$-module $M$, i.e. $N$ is absolutely $M$-pure for each finitely presented left $R$-module $M$ (see, \cite[6.2.3]{Relativehomologicalalgebra}). Then,  $\mathfrak{PI}$ is a pi-indigent right $R$-module by Proposition \ref{bycharacter} and Lemma  \ref{pure subinjectivity domains}.
\end{proof}
Note that a ring $R$ is right Noetherian if and only if all absolutely pure right $R$-modules are injective (see \cite[Theorem 3]{megibben}). We have the following observation by Proposition \ref{existness}.
\begin{corollary}\label{indigent=noetherian}Let $R$ be a ring. $R$ is right Noetherian if and only if $\mathfrak{PI}$ is an indigent module.
\end{corollary}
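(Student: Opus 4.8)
The plan is to prove Corollary \ref{indigent=noetherian} by combining the already-established fact that $\mathfrak{PI}$ is always pi-indigent (Proposition \ref{existness}) with Megibben's characterization of right Noetherian rings as exactly those over which every absolutely pure module is injective. The key conceptual observation is that \emph{indigent} and \emph{pi-indigent} differ only in the size of the smallest possible subinjectivity domain: for an arbitrary module the smallest domain is the class of injective modules, while for a pure injective module it is the (generally larger) class of absolutely pure modules. These two classes coincide precisely when every absolutely pure module is injective, which by Megibben is equivalent to $R$ being right Noetherian.

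First I would recall the two relevant definitions side by side. A module $M$ is indigent when $\underline{\mathfrak{In}}^{-1}(M)$ equals the class of injective modules, whereas $\mathfrak{PI}$ being pi-indigent means $\underline{\mathfrak{In}}^{-1}(\mathfrak{PI})$ equals the class of absolutely pure modules (Proposition \ref{existness}). Thus $\mathfrak{PI}$ is indigent if and only if its subinjectivity domain, which we already know to be the absolutely pure modules, coincides with the class of injective modules.

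For the forward direction, suppose $R$ is right Noetherian. By Megibben's theorem \cite[Theorem 3]{megibben}, every absolutely pure right $R$-module is injective; conversely every injective module is absolutely pure. Hence the two classes coincide, so $\underline{\mathfrak{In}}^{-1}(\mathfrak{PI})$ equals the class of injective modules and $\mathfrak{PI}$ is indigent. For the converse, suppose $\mathfrak{PI}$ is indigent. Then $\underline{\mathfrak{In}}^{-1}(\mathfrak{PI})$ is simultaneously the class of injective modules (by indigence) and the class of absolutely pure modules (by pi-indigence from Proposition \ref{existness}). Therefore every absolutely pure right $R$-module is injective, and by Megibben this forces $R$ to be right Noetherian.

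I expect no serious obstacle here, since the statement is essentially a matching of two equalities of module classes through a known ring-theoretic characterization; the only point requiring a little care is making explicit that every injective module is absolutely pure, so that the equality of the two classes is genuinely an equivalence rather than a one-sided containment. This containment is standard and follows immediately from the definition of absolute purity, since any short exact sequence beginning with an injective module splits and is therefore pure.
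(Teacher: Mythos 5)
Your proof is correct and follows exactly the paper's own (implicit) argument: the paper presents this corollary as an immediate observation combining Proposition \ref{existness} (that $\underline{\mathfrak{In}}^{-1}(\mathfrak{PI})$ is precisely the class of absolutely pure modules) with Megibben's theorem that $R$ is right Noetherian if and only if every absolutely pure right $R$-module is injective. Your added remark that injective modules are always absolutely pure, so the two classes genuinely coincide under the Noetherian hypothesis, is the same point the paper leaves tacit.
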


A ring $R$ is right semihereditary if and only if every homomorphic image of an absolutely pure right $R$-module is
absolutely pure (see \cite[Theorem 2]{megibben}). In general, the subinjectivity domain of a pure-injective module is not closed with respect to
homomorphic images.
Consider for example the right $R$-module $\mathfrak{PI}$ over a ring $R$ which is not right semihereditary.
Since $R$ is not right semihereditary ring, the subinjectivity domain of  $\mathfrak{PI}$ is not closed with respect to
homomorphic images by Proposition \ref{existness}.

\begin{proposition}\label{semihereditaryring} A ring $R$ is right semihereditary if and only if the subinjectivity domain of any pure injective right $R$-module is closed under homomorphic images.
\end{proposition}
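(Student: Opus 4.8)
The plan is to prove both implications, with the forward direction (right semihereditary $\Rightarrow$ closure under homomorphic images) carrying the real content and the converse being an immediate application of the pi-indigent module $\mathfrak{PI}$.

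For the converse, suppose the subinjectivity domain of every pure injective right $R$-module is closed under homomorphic images. I would apply this to $\mathfrak{PI}$, which is pi-indigent by Proposition \ref{existness}; thus $\underline{\mathfrak{In}}^{-1}(\mathfrak{PI})$ equals exactly the class of absolutely pure right $R$-modules. By hypothesis this class is closed under homomorphic images, so every homomorphic image of an absolutely pure module is again absolutely pure, whence $R$ is right semihereditary by \cite[Theorem 2]{megibben}.

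For the forward direction, assume $R$ is right semihereditary and fix a pure injective right $R$-module $M$ with $N \in \underline{\mathfrak{In}}^{-1}(M)$. Let $q : N \to \bar N$ be an epimorphism with kernel $N'$, and regard $N' \subseteq N \subseteq E(N)$. The idea is to realize $\bar N \cong N/N'$ inside the single module $P := E(N)/N'$ and then to verify the hypotheses of Lemma \ref{pure subinjectivity domains}(3) for the inclusion $\iota : \bar N \hookrightarrow P$. First I would note that $P$ is a homomorphic image of the injective—hence absolutely pure—module $E(N)$, so that $P$ is itself absolutely pure because $R$ is right semihereditary (\cite[Theorem 2]{megibben}); since $M$ is pure injective and every absolutely pure module lies in its subinjectivity domain, this gives $P \in \underline{\mathfrak{In}}^{-1}(M)$. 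Next, given any $\bar f : \bar N \to M$, I would extend $f := \bar f q : N \to M$ to some $g : E(N) \to M$ using that $M$ is $N$-subinjective; since $g$ vanishes on $N'$, it factors through $P = E(N)/N'$ as $g = h p$, where $p : E(N) \to P$ is the projection, and restricting along $\iota$ yields $h\iota = \bar f$. Lemma \ref{pure subinjectivity domains}(3) then shows that $M$ is $\bar N$-subinjective, i.e. $\bar N \in \underline{\mathfrak{In}}^{-1}(M)$, which is the desired closure.

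The crux—and the only place where semihereditarity enters—is the identification of $P = E(N)/N'$ as an absolutely pure module, since this is precisely what places $P$ in the subinjectivity domain of the pure injective module $M$ and allows Lemma \ref{pure subinjectivity domains}(3) to apply. I expect the main obstacle to be the structural passage from the homomorphic image $\bar N$ of $N$ to the homomorphic image $E(N)/N'$ of the injective hull $E(N)$: one must recognize that $\bar N$ embeds in $E(N)/N'$ and that a single extension $g$ of $f$ automatically factors through this quotient. Once this factorization is in place, the absolute purity supplied by Megibben's theorem closes the argument.
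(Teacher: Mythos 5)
Your proposal is correct and follows essentially the same route as the paper: the converse via the pi-indigent module $\mathfrak{PI}$ and Megibben's theorem, and the forward direction by extending $\bar f q$ to $E(N)$, factoring through $E(N)/N'$ (the paper's $E(N)/K$), observing that this quotient is absolutely pure by semihereditarity, and invoking Lemma \ref{pure subinjectivity domains}(3). Only the notation differs.
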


\begin{proof} Assume that $R$ is a right semihereditary ring. Suppose a pure injective right module $M$ is $N$-subinjective for a right module $N$.  Let $K$ be a submodule of $ N$ and $f: N/K\to M$ a homomorphism. Consider the following diagram:
$$
\begin{xy}
   \xymatrix{
    N \ar[d]^{\pi}\ar[r]^{\iota_{N}} &   E(N)\ar[d]^{\pi'}  \ar@{-->}[ldd]_{\varphi}  \\
    \frac{N}{K}\ar[d]^f\ar[r]&   \frac{E(N)}{K} \ar@{-->}[ld]^{\varphi{'}}\\
    M &
               }
\end{xy}
$$
where $\pi, \pi'$ are canonical epimorhisms. Since  $M$ is  $N$-subinjective, there is a $\varphi:E(N)\to M$ such that $\varphi_{|_N}=f\pi$. Clearly $f\pi(K)=0$, and so $K\subseteq Ker(\varphi)$. Then, by factor theorem, there exists a homomorphism $\varphi':E(N)/K\to M$ such that $\varphi'\pi'=\varphi$.
 Clearly, $\varphi'_{|_{N/K}}=f$ and, by semihereditary of $R$,  $\frac{E(N)}{K}$ is absolutely pure. Now, $M$ is $\frac{N}{K}$-subinjective by Lemma \ref{pure subinjectivity domains}. This proves the necessity.

Conversely suppose the subinjectivity domain of any pure injective right module is closed under homomorphic images. In particular, the subinjectivity domain of the module $\mathfrak{PI}$ is also closed under homomorphic image. But $\mathfrak{PI}$ is pi-indigent, and hence absolutely pure modules are closed under homomorphic image, this implies, by \cite[Theorem 2]{megibben}, that $R$ is a right semihereditary ring.
\end{proof}
A ring R is said to be a von Neumann regular ring if for each
$a \in R$ there is an $r \in R$ such that $a=  ara$. Every right (left) $R$-module is absolutely pure if and only
if $R$ is a von Neumann  regular ring. The proof of the following is obvious from the definitions.

\begin{proposition}\label{von neumann regular}
 The following statements are equivalent for a ring $R$:
 \begin{enumerate}
\item [(1)]$R$ is von Neumann regular.
\item [(2)]Every (non-zero) pure-injective right (left) $R$-module is
pi-indigent.
\item [(3)]There exists an injective  pi-indigent right (left) R-module.
\end{enumerate}
\end{proposition}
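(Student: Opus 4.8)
The plan is to establish the cycle of implications $(1) \Rightarrow (2) \Rightarrow (3) \Rightarrow (1)$, relying throughout on two facts recorded just before the statement: that a pure injective module $M$ is pi-indigent exactly when $\underline{\mathfrak{In}}^{-1}(M)$ is the class of absolutely pure modules, and that every right (left) $R$-module is absolutely pure if and only if $R$ is von Neumann regular. The second fact is what ties the abstract subinjectivity data to the ring-theoretic condition, so I would phrase every step in terms of the class of absolutely pure modules and compare it against $Mod-R$.

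For $(1) \Rightarrow (2)$, I would start from von Neumann regularity, which forces every module to be absolutely pure; hence the class of absolutely pure modules coincides with all of $Mod-R$. Given any non-zero pure injective $M$, its subinjectivity domain automatically contains every absolutely pure module, since a pure injective module is vacuously $N$-subinjective whenever $N$ is absolutely pure (as noted in the paragraph preceding the definition of pi-indigent). Thus here $\underline{\mathfrak{In}}^{-1}(M) = Mod-R$, which under the present hypothesis equals the class of absolutely pure modules, and so $M$ is pi-indigent.

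For $(2) \Rightarrow (3)$, I would simply exhibit a non-zero injective module, for instance $E(R_R)$; being injective it is pure injective, so $(2)$ renders it pi-indigent, and since it is injective it witnesses $(3)$. For $(3) \Rightarrow (1)$, I would take an injective pi-indigent $M$ and read its subinjectivity domain in two ways: injectivity gives $\underline{\mathfrak{In}}^{-1}(M) = Mod-R$, while pi-indigence gives $\underline{\mathfrak{In}}^{-1}(M) = \{N \mid N \text{ is absolutely pure}\}$. Equating the two shows that every module is absolutely pure, whence $R$ is von Neumann regular by the cited characterization.

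There is no serious obstacle here; the argument is essentially an unwinding of the definitions, matching the author's remark that the statement is obvious. The only point worth keeping in mind is the harmless observation that a non-zero injective module always exists and is automatically pure injective, so that the existential assertion $(3)$ has content. The entire substance of the proposition rests on the two background facts, both of which are already available in the excerpt.
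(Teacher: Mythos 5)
Your proposal is correct and matches the paper's intent: the paper omits the proof, stating it is ``obvious from the definitions,'' and your cycle $(1) \Rightarrow (2) \Rightarrow (3) \Rightarrow (1)$ is precisely that definitional unwinding, resting on the two facts that the subinjectivity domain of any pure-injective module contains all absolutely pure modules and that $R$ is von Neumann regular if and only if every module is absolutely pure.
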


\textit{In the rest of this article, unless otherwise stated, all rings will be non von Neumann regular.}

\begin{proposition}\label{nonflat ideals are pi-indigent}Let $I$ be a non flat right ideal of a ring $R$. If  $I^{+}$ is pi-indigent, then $R$  is a left absolutely pure.
\end{proposition}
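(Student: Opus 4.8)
The plan is to route the conclusion through the pi-indigence hypothesis. Since $I^{+}$ is pi-indigent, a left module lies in $\underline{\mathfrak{In}}^{-1}(I^{+})$ exactly when it is absolutely pure; hence to prove that $_RR$ is absolutely pure it suffices to show that $_RR$ itself belongs to $\underline{\mathfrak{In}}^{-1}(I^{+})$, i.e. that $I^{+}$ is $(_RR)$-subinjective. By Proposition \ref{bycharacter}, applied to the right module $I$ and the left module $_RR$, this is in turn equivalent to the assertion that $_RR$ is absolutely $I$-pure. So the whole statement reduces to one purely homological fact: the regular left module $_RR$ is absolutely $I$-pure.

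To establish that I would unwind the definition. The module $_RR$ is absolutely $I$-pure precisely when, for every left-module extension $\iota : {}_RR \hookrightarrow B$, the induced map $1\otimes\iota : I\otimes_R {}_RR \to I\otimes_R B$ is a monomorphism. This is where the hypothesis that $I$ is an \emph{ideal}, and not merely a module, is essential: the inclusion $j : I\hookrightarrow R$ of right modules furnishes a second map $j\otimes 1 : I\otimes_R B \to R\otimes_R B \cong B$, and composing the two recovers the restriction of $\iota$ to $I$, namely $i\mapsto \iota(i)=i\,b_{0}$ with $b_{0}=\iota(1)$. Because $\iota$ is injective this composite $I\to B$ is injective, so its first factor $1\otimes\iota$ must already be injective. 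Since this holds for every extension $B$ (and $I\otimes_R{}_RR\cong I$), the module $_RR$ is absolutely $I$-pure. Feeding this back through Proposition \ref{bycharacter} gives that $I^{+}$ is $(_RR)$-subinjective, whence $_RR\in\underline{\mathfrak{In}}^{-1}(I^{+})$, and pi-indigence of $I^{+}$ then forces $_RR$ to be absolutely pure, as required.

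The step I expect to be the genuine crux, and the one most prone to slips, is the middle paragraph: one has to exploit $I\subseteq R$ to manufacture the map $I\otimes_R B\to B$, and one must keep the left/right bookkeeping straight, since the character module of the right module $I$ is a left module and the two tensor factorizations live on opposite sides. I would note that the non-flatness of $I$ is not actually consumed by this argument; under the standing assumption that $R$ is not von Neumann regular it is automatic, because by Lambek's criterion a flat $I$ would make $I^{+}$ injective, and an injective pi-indigent module forces von Neumann regularity by Proposition \ref{von neumann regular}.
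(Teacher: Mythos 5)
Your proof is correct, but it reaches the key membership fact by a genuinely different route than the paper. Both arguments share the same skeleton: show that $_{R}R$ lies in $\underline{\mathfrak{In}}^{-1}(I^{+})$ and then let pi-indigence force $_{R}R$ to be absolutely pure. The paper gets the membership by duality: applying the character functor to $0\to I\to R$ exhibits $I^{+}$ as an epimorphic image of the injective left module $R^{+}$, and then it invokes \cite[Lemma 2.3]{p-indigent} (epimorphic images of injective modules are subinjective relative to every projective module, in particular relative to $_{R}R$). You instead prove directly, by the tensor factorization $I\otimes_{R}R\to I\otimes_{R}B\to R\otimes_{R}B\cong B$ built from the ideal inclusion $j:I\hookrightarrow R$, that $_{R}R$ is absolutely $I$-pure, and then pass through Proposition \ref{bycharacter}; your bookkeeping of sides is correct, and the composite being $\iota|_{I}$ does force $1\otimes\iota$ to be injective. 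What your route buys: it is self-contained within the paper (no appeal to the external lemma), and it establishes the stronger, purely formal fact that $_{R}R$ is absolutely $I$-pure for \emph{every} right ideal $I$, making transparent that neither flatness nor non-flatness of $I$ enters the argument. What the paper's route buys: brevity, and a technique (quotient-of-injective plus projectivity) that recurs later in the paper, e.g.\ in Case II of Theorem \ref{every simple is indigent}. Your closing observation is also sound: under the standing non--von Neumann regular hypothesis, non-flatness of $I$ is automatic, since a flat $I$ would make $I^{+}$ injective and an injective pi-indigent module contradicts Proposition \ref{von neumann regular}.
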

\begin{proof}Recall that a right $R$-module $M$ is flat if and only if $M^{+}$ is injective (\cite[Theorem 3.52]{Rotman:HomologicalAlgebra}). Therefore,  $I^{+}$ is not injective.
Assume that $I^{+}$ is pi-indigent. Note that $I^{+}$ is an epimorphic image of the injective module $R^{+}$.
But since $I^{+}$ is pi-indigent, $R$ must be left absolutely pure by \cite[Lemma 2.3]{p-indigent}.
\end{proof}

The weak global dimension of $R$, $wD(R)$, is less than or equal 1 if and only if every submodule of a flat right (left) $R$-module is flat if and only if every (finitely generated) right (left) ideal is flat, (see \cite[9.24]{Rotman:HomologicalAlgebra}).

\begin{corollary}\label{nonflat ideals are f-indigent}If  $I^{+}$ is injective or pi-indigent for every  finitely generated right ideal $I$ of $R$, then $R$  is a left absolutely pure or  $wD(R)\leq 1$.
\end{corollary}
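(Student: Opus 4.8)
The plan is to prove the statement by a simple dichotomy on the finitely generated right ideals, converting the disjunction in the hypothesis into the disjunction in the conclusion. The engine of the argument is the equivalence that a right $R$-module $M$ is flat precisely when $M^{+}$ is injective (\cite[Theorem 3.52]{Rotman:HomologicalAlgebra}), which translates statements about character modules into statements about flatness of ideals, together with the characterization of weak global dimension recalled just before the statement (\cite[9.24]{Rotman:HomologicalAlgebra}).

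First I would consider the case in which $I^{+}$ is injective for \emph{every} finitely generated right ideal $I$ of $R$. In that situation the cited equivalence forces every such $I$ to be flat, and then the weak global dimension criterion, which says that $wD(R)\leq 1$ exactly when every finitely generated right ideal is flat, yields $wD(R)\leq 1$. This gives the second alternative of the conclusion.

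Otherwise there must exist a finitely generated right ideal $I$ for which $I^{+}$ is not injective; by the hypothesis this $I^{+}$ is then pi-indigent. Since $I^{+}$ fails to be injective, the same equivalence shows that $I$ is not flat. Thus $I$ is a non-flat right ideal whose character module is pi-indigent, and Proposition \ref{nonflat ideals are pi-indigent} applies directly to conclude that $R$ is left absolutely pure, which is the first alternative.

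There is no genuine obstacle here beyond recognizing that the two cited facts split the hypothesis exactly along the two possibilities for $I^{+}$; the only point requiring a moment's care is to verify that \emph{not injective} on the character side corresponds to \emph{not flat} on the ideal side, which is precisely the contrapositive of the flat--character-injective equivalence, so that the single non-injective ideal produced in the second case is indeed eligible for Proposition \ref{nonflat ideals are pi-indigent}.
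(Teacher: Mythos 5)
Your proof is correct and is essentially the argument the paper intends: the corollary is stated without proof precisely because it follows, as you show, by splitting on whether all character modules $I^{+}$ are injective (giving $wD(R)\leq 1$ via the flat--character-injective equivalence and the recalled weak-dimension criterion) or some $I^{+}$ is non-injective, hence pi-indigent with $I$ non-flat, so that Proposition \ref{nonflat ideals are pi-indigent} gives that $R$ is left absolutely pure. The one point of care you flag — that non-injectivity of $I^{+}$ certifies non-flatness of $I$, making the proposition applicable — is exactly the right observation.
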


\section{Every pure-injective module is injective or pi-indigent}\label{sec:everymodulef-indigent}

This section deals with  the structure of a ring  $R$ with the property that  every non-injective pure-injective  module is subinjective relative only to absolutely pure  modules.
Note that a pure-injective right module is injective if it is also absolutely pure. The following proposition states an obvious fact without proof. We will use this proposition freely in the sequel.

\begin{proposition}\label{P} The following conditions are equivalent for a ring $R$:

\begin{enumerate}
\item [(1)] Every pure-injective left module is injective or  pi-indigent;
\item [(2)] Every pure-injective left module is absolutely pure or  pi-indigent.
\end{enumerate}
\end{proposition}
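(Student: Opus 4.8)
The plan is to reduce the statement to a single bridging observation: for a pure-injective module, injectivity and absolute purity are the very same condition. Once that equivalence is in hand, passing between (1) and (2) amounts to nothing more than substituting one word for the other in the dichotomy, since the pi-indigent alternative is untouched.

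First I would record both halves of the bridge. One direction is the fact already isolated immediately before the statement: a pure-injective left module that happens to be absolutely pure is injective. For the other direction I would invoke the standard fact that every injective module is absolutely pure; this is because an exact sequence $0 \to M \to B \to C \to 0$ with $M$ injective splits, and every split exact sequence is pure (indeed $\delta$-pure for every class $\delta$), so that $M$ is vacuously absolutely pure. Combining the two, a pure-injective module $M$ is injective \emph{if and only if} it is absolutely pure.

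With this equivalence established, the two implications are immediate. For $(1) \Rightarrow (2)$, take any pure-injective left module $M$; by hypothesis it is injective or pi-indigent, and in the first case the bridge replaces ``injective'' by ``absolutely pure,'' giving exactly the alternative in (2). For $(2) \Rightarrow (1)$, take any pure-injective $M$; by hypothesis it is absolutely pure or pi-indigent, and in the first case the bridge promotes ``absolutely pure'' to ``injective,'' yielding the alternative in (1). In either direction the pi-indigent option carries over verbatim.

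There is no genuine obstacle here, which is why the statement is flagged as obvious; the only point requiring a moment's care is verifying that the two classes truly coincide \emph{for pure-injective modules} rather than in general. Absolute purity implies injectivity only under the pure-injectivity hypothesis, so I would be careful to apply the bridge only to modules already known to be pure-injective, which is precisely the setting of both conditions.
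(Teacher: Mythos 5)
Your proposal is correct and matches the paper's intent exactly: the paper states this proposition without proof as an obvious consequence of the remark immediately preceding it (a pure-injective module that is absolutely pure is injective), combined with the standard fact that injective modules are absolutely pure. Your write-up simply makes explicit the two halves of that bridge, which is precisely the argument the paper leaves to the reader.
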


For easy reference, (P) stands for the property that $R$ satisfies the equivalent conditions of Proposition \ref{P}.

 The following Proposition can be proved using standard arguments.  We omit its proof, which has much in common with the proof of \cite[6.2.3]{Relativehomologicalalgebra}.
\begin{proposition}\label{m-fp-injective} The following statements are
equivalent for any given modules $M_{R}$ and $_{R}N$.
\begin{enumerate}
\item [(1)]$_{R}N$ is absolutely $M_{R}$-pure.
\item [(2)]$M\otimes N\to M\otimes E(N)$ is a monomorphism.
\item [(3)] There exists an absolutely pure extension $E$ of $N$ such that $ M\otimes N\to M\otimes E$ is a monomorphism.
\end{enumerate}
\end{proposition}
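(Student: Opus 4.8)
The plan is to prove the cycle $(1)\Rightarrow(2)\Rightarrow(3)\Rightarrow(1)$. The implication $(1)\Rightarrow(2)$ is immediate, since the injective hull provides the particular extension $0\to N\to E(N)\to E(N)/N\to 0$; if $_{R}N$ is absolutely $M$-pure, this sequence is in particular $M$-pure, which says exactly that $M\otimes N\to M\otimes E(N)$ is a monomorphism. For $(2)\Rightarrow(3)$ I would simply take $E=E(N)$: every injective module is absolutely pure (fp-injective), so $E(N)$ is an absolutely pure extension of $N$, and hypothesis (2) supplies the required monomorphism $M\otimes N\to M\otimes E(N)$. Thus the only substantive content is the remaining implication.

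The heart of the argument is $(3)\Rightarrow(1)$. Suppose $E$ is an absolutely pure extension of $N$ with $M\otimes N\to M\otimes E$ monic, and let $N\hookrightarrow B$ be an arbitrary extension; I must show $M\otimes N\to M\otimes B$ is monic. The idea is to amalgamate $E$ and $B$ over $N$: form the pushout $P$ of the two monomorphisms $N\to E$ and $N\to B$. Since the pushout of a monomorphism is again a monomorphism in the category of left $R$-modules, the induced map $E\to P$ is monic, and the pushout square commutes, so the composite $N\to E\to P$ coincides with $N\to B\to P$. Now I invoke the absolute purity of $E$: being absolutely pure, $E$ is a pure submodule of every module containing it, in particular of $P$, so $M\otimes E\to M\otimes P$ is a monomorphism. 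Composing with the hypothesis shows that $M\otimes N\to M\otimes E\to M\otimes P$ is monic, that is, $M\otimes N\to M\otimes P$ is a monomorphism. Since this map factors through $M\otimes N\to M\otimes B\to M\otimes P$ by commutativity of the square, its first factor $M\otimes N\to M\otimes B$ is itself monic. As $B$ was arbitrary, $_{R}N$ is absolutely $M$-pure, closing the cycle.

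The main obstacle is precisely the $(3)\Rightarrow(1)$ step, and the reason it requires care is that (3) furnishes only an absolutely pure, not an injective, extension $E$. Against $E(N)$ one could extend any map $B\to E(N)$ by injectivity and factor directly (this is what makes the easier route $(2)\Rightarrow(1)$ work); with merely absolutely pure $E$ no such extension need exist, and the pushout construction is exactly the device that lets purity of $E$ in $P$ substitute for injectivity, transporting the single monomorphism hypothesis to every extension $B$ of $N$.
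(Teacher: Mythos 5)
Your proof is correct. Bear in mind that the paper gives no argument here at all: it explicitly omits the proof, remarking only that it follows by ``standard arguments'' with much in common with the proof of \cite[6.2.3]{Relativehomologicalalgebra}, so your write-up supplies what the paper leaves implicit rather than mirrors an existing argument. The standard route alluded to handles the substantive implication through injectivity: assuming (2), for an arbitrary extension $N\subseteq B$ one extends the inclusion $N\to E(N)$ to a homomorphism $B\to E(N)$ and notes that the monomorphism $M\otimes N\to M\otimes E(N)$ then factors through $M\otimes B$, giving $(2)\Rightarrow(1)$; implication $(3)\Rightarrow(2)$ is in turn obtained by embedding $E$ into its injective hull $E(E)$, using purity of the absolutely pure module $E$ in $E(E)$ together with the fact that an injective hull of $N$ inside $E(E)$ splits off as a direct summand. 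Your pushout amalgamation instead proves $(3)\Rightarrow(1)$ in a single step, using only the defining property of absolute purity and no injectivity whatsoever; this is cleaner and slightly more robust, since it works verbatim in any situation where pushouts preserve monomorphisms. The one point you use tacitly that deserves a sentence is this: purity of $E$ in $P$ is classically tested against finitely presented right modules, whereas your $M$ is arbitrary, so you need the standard equivalence between purity relative to finitely presented modules and purity relative to all modules (via direct limits) to conclude that $M\otimes E\to M\otimes P$ is monic; the paper records exactly this equivalence when it introduces $\delta$-purity, so the appeal is legitimate.
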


\begin{proposition}\label{every module is flat or indigent}Let $R$ be a two-sided  coherent ring satisfying the condition (P).
Then $R$ is either $R$ is two-sided semihereditary or $R$ is right IF-ring.
\end{proposition}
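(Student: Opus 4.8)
The plan is to feed condition (P) into Corollary \ref{nonflat ideals are f-indigent} and then resolve the resulting dichotomy using the two-sided coherence hypothesis. First I would fix a finitely generated right ideal $I$ of $R$ and observe that its character module $I^{+}$ is a pure-injective left $R$-module; applying (P) (in the form of Proposition \ref{P}) to $I^{+}$, it is either absolutely pure or pi-indigent, and since a pure-injective absolutely pure module is injective, $I^{+}$ is injective or pi-indigent. As this holds for every finitely generated right ideal, the hypothesis of Corollary \ref{nonflat ideals are f-indigent} is met, so $R$ is left absolutely pure or $wD(R)\le 1$. The whole proof then splits along this dichotomy, each alternative being matched to one of the two desired conclusions.

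In the branch $wD(R)\le 1$, I would argue that $R$ is two-sided semihereditary. By the characterization of weak dimension recalled just before the corollary, every finitely generated one-sided ideal of $R$ is flat. Because $R$ is two-sided coherent, every finitely generated right (resp. left) ideal is finitely presented, and a finitely presented flat module is projective; hence every finitely generated one-sided ideal is projective, which is exactly two-sided semihereditariness.

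In the branch where $R$ is left absolutely pure, I would show that $R$ is a right IF-ring, i.e. that every injective right $R$-module is flat. The cleanest route uses the adjoint/character formulas already in play in this section: for a finitely presented left module $F$ and a right module $E$ one has $\Tor_{1}(E,F)^{+}\cong \Ext^{1}(F,E^{+})$, and a left module $M$ is FP-injective exactly when $M^{+}$ is flat. By left coherence the class of FP-injective left modules is closed under direct limits, and since $_{R}R$ is FP-injective it contains every finitely generated free module; hence every flat left module, being a direct limit of finitely generated frees, is FP-injective. Now for an injective, hence FP-injective, right module $E$, the module $E^{+}$ is flat and therefore FP-injective, so $\Ext^{1}(F,E^{+})=0$ for every finitely presented left $F$; dualizing gives $\Tor_{1}(E,F)=0$ for all such $F$, i.e. $E$ is flat. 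Alternatively one may simply invoke the known equivalence that $R$ is right IF precisely when $R$ is left coherent and $_{R}R$ is FP-injective.

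The main obstacle I anticipate is the bookkeeping of handedness in this last branch: the character functor interchanges left and right modules, so one must verify that \emph{left} absolute purity (the output of Corollary \ref{nonflat ideals are f-indigent}) pairs with \emph{left} coherence to yield a \emph{right}-sided IF conclusion, rather than its mirror image. Getting the variance of the $\Tor$--$\Ext$ duality correct, and invoking left coherence exactly where the direct-limit closure of FP-injective modules is needed, is the delicate point; by comparison the semihereditary branch is routine.
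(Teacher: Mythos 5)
Your proposal is correct, and its skeleton is the same as the paper's proof: both feed condition (P) into Corollary \ref{nonflat ideals are f-indigent} (via the observation that $I^{+}$ is pure-injective for every finitely generated right ideal $I$) to obtain the dichotomy ``$R$ left absolutely pure or $wD(R)\le 1$,'' and both settle the $wD(R)\le 1$ case by coherence (finitely generated one-sided ideals are flat and finitely presented, hence projective). The only genuine divergence is the other branch: the paper simply cites \cite[Proposition 4.2]{stenstrom} to pass from ``$R$ left absolutely pure and coherent'' to ``$R$ right IF,'' whereas you re-prove this inline with a character-module argument. Your argument is sound, but one dependence deserves flagging beyond the handedness issue you raised: the equivalence you quote, that a module is FP-injective exactly when its character module is flat, is \emph{not} a fact about arbitrary rings. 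The implication you actually use --- $E$ injective (hence FP-injective) right module implies $E^{+}$ flat --- is precisely the direction that requires right coherence; it is one of the Cheatham--Stone characterizations of coherence \cite{Flatandprojectivecharactermodules}. Concretely, for a finitely presented right module $G$ one only gets a natural surjection $\Tor_{1}(G,E^{+})\twoheadrightarrow \Ext^{1}(G,E)^{+}$ in general, which is an isomorphism once right coherence makes the first syzygy of $G$ finitely presented; without coherence, vanishing of $\Ext^{1}(G,E)$ does not force vanishing of $\Tor_{1}(G,E^{+})$. So coherence enters your IF branch twice --- there, and in the direct-limit closure of FP-injective modules, which is the only place you invoked it. Since two-sided coherence is a standing hypothesis, both uses are legitimate and your proof stands; what it buys, compared with the paper, is a self-contained argument in place of an external citation, at the cost of having to track exactly where each side of coherence is consumed.
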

\begin{proof} By Corollary \ref{nonflat ideals are f-indigent}, $R$  is  $wD(R)\leq 1$ or a left absolutely pure. In the former case, by the coherence of $R$, $R$ is a two-sided semihereditary ring. In the latter case, by \cite[Proposition 4.2]{stenstrom}, $R$ is right IF-ring, i.e. every absolutely pure right module is flat.
 \end{proof}

In \cite{p-poor}, Holston et al. are interested in the projective analog of the notion of
subinjectivity. Namely, a module $M$ is said to be $N$-subprojective
if for every epimorphism $g : B\to N$ and homomorphism $f : M \to
N$, then there exists a homomorphism $h : M \to B$ such that $gh =
f$. For a module $M$, the subprojectivity domain of $M$,
$\underline{\mathfrak{Pr}}^{-1}(M)$, is defined to be the collection
of all modules $N$ such that $M$ is $N$-subprojective, that is
$\underline{\mathfrak{Pr}}^{-1}(M)=\{N\in Mod-R|$ M is
N-subprojective$\}$.

\begin{remark}\label{auslandertranspoze}Let $M_{R}$ be a finitely presented module, that is, $M$
has a free presentation $F_{1}\overset{f}{\to} F_{0}\to M\to 0$
where $F_{0}$ and $F_{1}$ are finitely generated free modules. If we
apply the functor $\Hom_{R}(.,R)$ to this presentation, we obtain
the sequence
$$\xymatrix
{0\to M^{*}\to F_{0}^{*}\to F_{1}^{*}\to Tr(M)\to 0}$$ where $Tr(M)$
is the cokernel of the dual map $F_{0}^{*}\to F_{1}^{*}$. Note that,
$Tr(M)$ is a finitely presented left $R$-module. The left $R$-module
$Tr(M)$ is called the Auslander-Bridger transpose of the right
$R$-module $M$, (see \cite{auslander}).\end{remark}
\begin{proposition}\cite[Proposition 2.7]{f-indigent}\label{charecteronscoherent}
Let $M_{R}$ be a finitely presented module. The following properties
hold for any modules $N_{R}$ and $_{R}K$:
\begin{enumerate}
\item [(1)] $K$ is absolutely $M$-pure if and only if
$\Ext(Tr(M),K)=0$.
\item [(2)] $M$ is $N$-subprojective if and only if  $\Tor(N,\,
Tr(M))=0$.
\end{enumerate}
\end{proposition}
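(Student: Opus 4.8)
The plan is to reduce both equivalences to two natural isomorphisms that express the first cohomology and first homology of $Tr(M)$ through the two canonical evaluation maps attached to $M$. Throughout I fix the finite free presentation $F_{1}\overset{f}{\to}F_{0}\overset{p}{\to}M\to 0$ from Remark \ref{auslandertranspoze} and its dual four-term exact sequence $0\to M^{*}\overset{p^{*}}{\to}F_{0}^{*}\overset{f^{*}}{\to}F_{1}^{*}\to Tr(M)\to 0$, so that the epimorphism $F_{1}^{*}\to Tr(M)$, the map $f^{*}\colon F_{0}^{*}\to F_{1}^{*}$, and a free module mapping onto $M^{*}=\ker f^{*}$ furnish the start of a projective resolution of $Tr(M)$. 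For a finitely generated free module $F$ one has the canonical isomorphisms $N\otimes F^{*}\cong\Hom(F,N)$ and $\Hom(F^{*},K)\cong F\otimes K$, and under these the differential $1\otimes f^{*}$ is carried to precomposition with $f$, while the double dual $(f^{*})^{*}$ is carried to $f\otimes 1$. Checking that these identifications really transport the differentials of the resolution to $\Hom(f,N)$ and to $f\otimes 1$ is the bookkeeping on which everything rests, and I expect this naturality/double-dual verification to be the main (if routine) obstacle.

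For (2), I would compute $\Tor(N,Tr(M))$ from the above resolution. Tensoring with $N\otimes_{R}-$ and taking homology at $N\otimes F_{0}^{*}$ yields $\Tor(N,Tr(M))\cong\ker(1\otimes f^{*})/\operatorname{im}(1\otimes p^{*})$, where the denominator is $\operatorname{im}(1\otimes p^{*})$ by right exactness of the tensor product. Transporting through $N\otimes F_{i}^{*}\cong\Hom(F_{i},N)$, the numerator becomes $\{\varphi\colon F_{0}\to N\mid \varphi f=0\}\cong\Hom(M,N)$, since such $\varphi$ kill $\operatorname{im}f=\ker p$ and hence factor uniquely through $p$, while the denominator becomes the image of the evaluation map $\theta\colon N\otimes M^{*}\to\Hom(M,N)$, $\theta(n\otimes\phi)=(m\mapsto n\phi(m))$. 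Thus $\Tor(N,Tr(M))\cong\operatorname{coker}\theta$. One then identifies $\operatorname{im}\theta$ with the set of homomorphisms $M\to N$ that factor through a finitely generated free module, so that $\theta$ is surjective exactly when every map $M\to N$ so factors. Finally I would invoke the standard description of subprojectivity: since $M$ is finitely generated, $M$ is $N$-subprojective if and only if every map $M\to N$ factors through a projective module, which—shrinking the image to a finite free summand—is the same condition. This gives $\Tor(N,Tr(M))=0$ if and only if $M$ is $N$-subprojective.

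For (1), I would dually compute $\Ext(Tr(M),K)$ from the same resolution. Applying $\Hom(-,K)$ and using $\Hom(F_{i}^{*},K)\cong F_{i}\otimes K$, the cochain complex becomes $F_{1}\otimes K\overset{f\otimes 1}{\to}F_{0}\otimes K\to\cdots$; identifying $\operatorname{coker}(f\otimes 1)=M\otimes K$ and recognizing the next differential, via naturality, as the evaluation map $\sigma\colon M\otimes K\to\Hom(M^{*},K)$, $\sigma(m\otimes k)=(\phi\mapsto\phi(m)k)$, one obtains $\Ext(Tr(M),K)\cong\ker\sigma$. To finish I would connect $\ker\sigma=0$ with absolute $M$-purity. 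Applied to the injective hull $E(K)$, vanishing of $\Ext(Tr(M),E(K))$ shows $\sigma^{E(K)}$ is a monomorphism, while $\Hom(M^{*},K)\to\Hom(M^{*},E(K))$ is a monomorphism by left exactness of $\Hom$. A short chase in the naturality square of $\sigma$ along $K\hookrightarrow E(K)$ then shows that $\sigma^{K}$ is a monomorphism if and only if $M\otimes K\to M\otimes E(K)$ is a monomorphism. By Proposition \ref{m-fp-injective} the latter says precisely that $K$ is absolutely $M$-pure, giving $\Ext(Tr(M),K)=0$ if and only if $K$ is absolutely $M$-pure.

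The genuinely delicate points are thus the two naturality identifications of the dualized differentials with $\Hom(f,N)$ and $f\otimes 1$, and the final diagram chase linking $\ker\sigma$ to the purity condition; once the natural isomorphisms $\Tor(N,Tr(M))\cong\operatorname{coker}\theta$ and $\Ext(Tr(M),K)\cong\ker\sigma$ are established, both equivalences follow from the elementary descriptions of $\operatorname{im}\theta$ and $\ker\sigma$. Everything is natural in $N$ and in $K$, so no finiteness hypothesis on $N$ or $K$ is required beyond the standing assumption that $M$ is finitely presented.
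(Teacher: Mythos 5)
Your proof is correct, but there is nothing in this paper to compare it against: the proposition is not proved here at all, it is quoted from \cite[Proposition 2.7]{f-indigent}, so your argument supplies a self-contained proof where the paper only cites one. What you give is the classical Auslander--Bridger computation, and its key steps check out. For (2): computing from the resolution $G\to F_{0}^{*}\overset{f^{*}}{\to}F_{1}^{*}\to Tr(M)\to 0$ (with $G$ free mapping onto $M^{*}=\ker f^{*}$) and transporting along $N\otimes F_{i}^{*}\cong\Hom(F_{i},N)$ does give $\Tor(N,Tr(M))\cong\operatorname{coker}\theta$ for the evaluation map $\theta\colon N\otimes M^{*}\to\Hom(M,N)$; the image of $\theta$ is exactly the set of maps $M\to N$ factoring through finitely generated free modules; and the factorization criterion for subprojectivity (lift along a free cover of $N$ for one direction, use projectivity of the middle term for the other), together with finite generation of $M$ to shrink any factorization through a free module to one through a finitely generated free summand, closes the equivalence. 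For (1): $\Ext(Tr(M),K)\cong\ker\sigma$ for the evaluation map $\sigma\colon M\otimes K\to\Hom(M^{*},K)$ is likewise correct, and your naturality-square chase along $K\hookrightarrow E(K)$ is exactly the right bridge to Proposition \ref{m-fp-injective}: $\sigma^{E(K)}$ is monic because $\Ext(Tr(M),E(K))=0$, $\Hom(M^{*},-)$ preserves the monomorphism $K\to E(K)$, and the square forces $\sigma^{K}$ monic if and only if $M\otimes K\to M\otimes E(K)$ is monic, i.e.\ if and only if $K$ is absolutely $M$-pure. The one point to state carefully is the subtlety you flag yourself: the second differential of $\Hom(-,K)$ applied to the resolution lands in $\Hom(G,K)$, not in $\Hom(M^{*},K)$; since $G\twoheadrightarrow M^{*}$ makes $\Hom(M^{*},K)\to\Hom(G,K)$ monic, the kernel is unchanged, and this is what legitimizes the identification $\Ext(Tr(M),K)\cong\ker\sigma$ (and symmetrically, right exactness of the tensor product legitimizes replacing the image from $N\otimes G$ by $\operatorname{im}(1\otimes p^{*})$ in the Tor computation).
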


\begin{corollary}\label{characterof sub pro and inj}
Let $M_{R}$ be a finitely presented module. The following properties
hold for any modules $N_{R}$ and $_{R}K$:
\begin{enumerate}
\item [(1)] $K\in\underline{\mathfrak{In}}^{-1}(M^{+})$ if and only if $K^{+}\in\underline{\mathfrak{Pr}}^{-1}(M)$.

\item [(2)] $N\in\underline{\mathfrak{Pr}}^{-1}(M)$ if and only if $N^{+}\in\underline{\mathfrak{In}}^{-1}(M^{+})$.
\end{enumerate}
\end{corollary}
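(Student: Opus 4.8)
The plan is to convert each biconditional into a statement about the vanishing of a derived functor attached to the Auslander--Bridger transpose $Tr(M)$, and then to pass between $\Ext$ and $\Tor$ by means of character-module duality. Since $M_{R}$ is finitely presented, $Tr(M)$ is a finitely presented left $R$-module (Remark \ref{auslandertranspoze}), so Proposition \ref{charecteronscoherent} is available throughout. Everywhere the crucial discipline is to remember that $M$ and $N$ are right modules while $Tr(M)$, $K$, $M^{+}$ and $N^{+}$ are left modules, so that each $\Hom$, $\otimes$, $\Ext$ and $\Tor$ is formed over the correct side.

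For part (1), I would first note that $K\in\underline{\mathfrak{In}}^{-1}(M^{+})$ means exactly that $M^{+}$ is $K$-subinjective, which by Proposition \ref{bycharacter} is equivalent to $K$ being absolutely $M$-pure, hence by Proposition \ref{charecteronscoherent}(1) to $\Ext(Tr(M),K)=0$. On the other side, $K^{+}\in\underline{\mathfrak{Pr}}^{-1}(M)$ means $M$ is $K^{+}$-subprojective, which by Proposition \ref{charecteronscoherent}(2) is equivalent to $\Tor(K^{+},Tr(M))=0$. Thus (1) reduces to the equivalence $\Ext(Tr(M),K)=0 \Leftrightarrow \Tor(K^{+},Tr(M))=0$. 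To close it, I would establish the natural isomorphism $\Tor(K^{+},Tr(M))\cong\Ext(Tr(M),K)^{+}$: taking a resolution of the finitely presented module $Tr(M)$ by finitely generated projective left modules $P_{\bullet}$ and using the exactness of $(-)^{+}$ together with the identity $\Hom_{R}(P,K)^{+}\cong K^{+}\otimes_{R}P$ for finitely generated projective $P$, one identifies $H_{n}(K^{+}\otimes_{R}P_{\bullet})$ with $H^{n}(\Hom_{R}(P_{\bullet},K))^{+}$. Since $\Q/\Z$ is an injective cogenerator, $X^{+}=0$ if and only if $X=0$, so the two vanishing conditions coincide.

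For part (2), the same dictionary gives that $M$ is $N$-subprojective if and only if $\Tor(N,Tr(M))=0$ (Proposition \ref{charecteronscoherent}(2)), while $N^{+}\in\underline{\mathfrak{In}}^{-1}(M^{+})$ says $M^{+}$ is $N^{+}$-subinjective, equivalently (Proposition \ref{bycharacter}, applied to the left module $N^{+}$) that $N^{+}$ is absolutely $M$-pure, equivalently (Proposition \ref{charecteronscoherent}(1)) that $\Ext(Tr(M),N^{+})=0$. Here the required bridge is the cleaner, unconditional duality $\Tor(N,Tr(M))^{+}\cong\Ext(Tr(M),N^{+})$, obtained by deriving the adjoint isomorphism $(X\otimes_{R}Y)^{+}\cong\Hom_{R}(Y,X^{+})$ recalled before Proposition \ref{bycharacter} (again using that $(-)^{+}$ is exact). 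Invoking the cogenerator property of $\Q/\Z$ once more yields $\Tor(N,Tr(M))=0 \Leftrightarrow \Ext(Tr(M),N^{+})=0$, which is precisely (2).

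The routine translation steps are immediate from the cited propositions; the only real content lies in the two character-module duality isomorphisms, and the main thing to get right is the bookkeeping of left/right variance in them. I expect that once those formulas are set up with the correct handedness, the injective-cogenerator property of $\Q/\Z$ finishes both equivalences at a stroke. A mild subtlety worth flagging is that the finite presentation of $Tr(M)$ is genuinely needed for the isomorphism used in (1) (it is what allows $\Hom_{R}(P_{\bullet},K)^{+}$ to be computed as $K^{+}\otimes_{R}P_{\bullet}$), whereas the duality used in (2) holds without any finiteness assumption.
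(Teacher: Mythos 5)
Your overall architecture is exactly the paper's: translate $K\in\underline{\mathfrak{In}}^{-1}(M^{+})$ into ``$K$ is absolutely $M$-pure'', hence into $\Ext(Tr(M),K)=0$, translate membership in $\underline{\mathfrak{Pr}}^{-1}(M)$ into vanishing of $\Tor(-,Tr(M))$, bridge the two by character-module duality, and finish with the fact that $X^{+}=0$ iff $X=0$. Your part (2) is correct as written: the isomorphism $\Tor(N,Tr(M))^{+}\cong\Ext(Tr(M),N^{+})$ really is unconditional, since it comes from the adjoint isomorphism applied termwise to any projective resolution of $Tr(M)$ together with exactness of $(-)^{+}$; this is precisely what the paper's ``the proof of (2) is similar'' amounts to.

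The gap is in your proof of the bridge for part (1). The paper does not prove the isomorphism $\Tor(K^{+},Tr(M))\cong\Ext(Tr(M),K)^{+}$; it cites it (Rotman, Theorem 9.51). You instead sketch a proof that begins by ``taking a resolution of the finitely presented module $Tr(M)$ by finitely generated projective left modules $P_{\bullet}$.'' Finite presentation of $Tr(M)$ only guarantees that $P_{0}$ and $P_{1}$ can be chosen finitely generated; a resolution that is finitely generated in \emph{all} degrees (i.e. $Tr(M)$ of type $FP_{\infty}$) exists in general only under extra hypotheses, e.g. when $R$ is coherent, and the corollary is stated for arbitrary rings. This matters even though only $n=1$ is needed: to identify $H_{1}(K^{+}\otimes P_{\bullet})$ with $H^{1}(\Hom(P_{\bullet},K))^{+}$ by your termwise argument you must apply $\Hom(P,K)^{+}\cong K^{+}\otimes P$ in degree $2$ as well, and with only $P_{0},P_{1}$ finitely generated the natural comparison map $\Tor_1(K^{+},Tr(M))\to\Ext^{1}(Tr(M),K)^{+}$ comes out surjective, while its injectivity is exactly the delicate point (it amounts to injectivity of the natural map $K^{+}\otimes\Omega\to\Hom(\Omega,K)^{+}$ for the finitely generated, but possibly not finitely presented, syzygy $\Omega$). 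So your sketch proves (1) only over rings where finitely presented modules have finitely generated projective resolutions (e.g. coherent rings); in the stated generality you should either invoke the cited duality theorem, as the paper does, or supply the finer argument that closes the injectivity step. Your closing remark that ``finite presentation of $Tr(M)$ is genuinely needed'' is true but understates what your particular argument consumes.
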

\begin{proof}
$(1)$ Let $K\in\underline{\mathfrak{In}}^{-1}(M^{+})$. Then
 $K$ is absolutely $M$-pure by Proposition \ref{m-fp-injective}, and so, by Proposition \ref{charecteronscoherent},
$\Ext(Tr(M),K)=0$. Since $\Tor(K^{+},\, Tr(M))\cong\Ext(Tr(M),K)^{+}$ by
 \cite[Theorem 9.51]{Rotman:HomologicalAlgebra},  $\Tor(K^{+},\, Tr(M))=0$. Hence, $K^{+}\in\underline{\mathfrak{Pr}}^{-1}(M)$ by Proposition \ref{charecteronscoherent}, and vice versa.

The proof of $(2)$ is similar to the proof of $(1)$.
\end{proof}

A module $M$ is called a Whitehead test
module for injectivity (i-test module)
 if $N$ is injective whenever $\Ext_{R}^{ 1}(M,N) =
0$ (\cite{trlifaj1}). A non-semisimple ring $R$ is said to be fully(resp. n-) saturated
provided that all non-projective (resp. finitely generated) modules are i-test. In \cite[Theorem
16]{noyanenginalizade}, authors proved that
a non von Neumann regular ring $R$ is right fully saturated if and
only if all non-injective modules are indigent.
\begin{lem}\label{theorem 5} Let $R$ be a  left noetherian left nonsingular ring which is satisfying the condition (P). Then, every non-flat left $R$-module is i-test. In particular, $R$ is left n-saturated ring.

 \end{lem}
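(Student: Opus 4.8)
The plan is to exploit left noetherianness to turn condition (P) into a statement about indigent modules, then to recast flatness as an Ext-orthogonality condition against pure-injectives, and finally to use (P) together with left nonsingularity to force the test property. As a preliminary reduction, since $R$ is left noetherian, the left-hand version of \cite[Theorem 3]{megibben} tells us that the absolutely pure left $R$-modules are exactly the injective ones; hence pi-indigent left modules are precisely indigent modules, and (P) (via Proposition \ref{P}) now reads: \emph{every pure-injective left $R$-module is either injective or indigent}. I also record two standard facts I will use freely: $N$ is injective iff $\Ext(F,N)=0$ for every finitely presented $F$ (the absolutely pure criterion plus left noetherianness), and a pure submodule of an injective (equivalently, absolutely pure) left module is again absolutely pure, hence injective.

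Next I would establish that a left module $M$ is flat if and only if $\Ext(M,B)=0$ for every pure-injective left module $B$. The adjoint isomorphism $(A\otimes M)^{+}\cong\Hom(M,A^{+})$ recalled before Proposition \ref{bycharacter}, together with the injectivity of $\Q/\Z$, yields the natural isomorphism $\Ext(M,A^{+})\cong\Tor(A,M)^{+}$ for every right module $A$; since $(-)^{+}$ is faithful, $\Ext(M,A^{+})=0$ iff $\Tor(A,M)=0$. Because every pure-injective left module is a direct summand of its bidual $N^{++}=(N^{+})^{+}$, which is a character module, and because the Ext-orthogonal $\{X : \Ext(M,X)=0\}$ is closed under direct summands, we get $\Ext(M,B)=0$ for all pure-injective $B$ iff $\Tor(A,M)=0$ for all right $A$, that is, iff $M$ is flat. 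Consequently, if $M$ is non-flat there is a pure-injective $B$ with $\Ext(M,B)\neq0$; as $\Ext(M,-)$ vanishes on injectives, (P) forces this $B$ to be indigent.

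To finish, let $M$ be non-flat and take any $N$ with $\Ext(M,N)=0$; the goal is to show $N$ is injective. The intended route is to pass to $N^{++}$: since $N\hookrightarrow N^{++}$ is a pure monomorphism and, over the left noetherian ring $R$, a pure submodule of an injective module is injective, it suffices to prove $N^{++}$ is injective. Now $N^{++}$ is pure-injective, so by (P) it is injective or indigent, and the task is to exclude the indigent case using $\Ext(M,N)=0$ and the non-flatness of $M$. This transfer step is exactly where I expect the real difficulty: one must deduce the vanishing $\Ext(M,N^{++})=0$, equivalently $\Tor(N^{+},M)=0$, from $\Ext(M,N)=0$, and then contradict indigence of $N^{++}$ via the indigent witness produced in the previous paragraph. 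The naive implication fails because $\Ext(M,-)$ does not commute with the direct limits presenting $M$ by finitely presented modules, and it is precisely here that left nonsingularity must enter: by Goldie's theorem the maximal left quotient ring of the left noetherian left nonsingular ring $R$ is semisimple Artinian, the Goldie singular torsion theory is well behaved, flat left modules are nonsingular, and the non-injective (hence indigent) pure-injective modules are controlled tightly enough that an indigent $B$ with $\Ext(M,B)=0$ would force $M$ to be flat. Making this last implication rigorous is the crux of the argument.

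Finally, the ``in particular'' is immediate once the main claim is established. Over a left noetherian ring a finitely generated flat module is projective, so a finitely generated non-projective left module is non-flat and therefore i-test; and since $R$ is non von Neumann regular it is in particular non-semisimple, so $R$ is left $n$-saturated.
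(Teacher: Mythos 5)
There is a genuine gap, and you in fact concede it yourself: the step ``exclude the indigent case for $N^{++}$ using $\Ext(M,N)=0$ and non-flatness of $M$'' is never carried out, and it is precisely the heart of the lemma. The missing idea is the bridge between Ext-vanishing and subinjectivity domains, which the paper obtains from the Auslander--Bridger transpose: for a \emph{finitely presented} left module $S$, Proposition \ref{charecteronscoherent} together with Proposition \ref{bycharacter} gives that $\Ext(S,N)=0$ if and only if the pure-injective module $Tr(S)^{+}$ is $N$-subinjective. The paper's proof therefore runs as follows: first, Corollary \ref{nonflat ideals are f-indigent} shows $R$ is left fp-injective or $wD(R)\leq 1$; the first case makes $R$ a QF-ring, which left nonsingularity excludes, so $R$ is left hereditary (this is the \emph{only} place nonsingularity enters). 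Next, a non-flat $M$ has a finitely presented non-projective submodule $S$; if $\Ext(S,N)=0$, then since $Tr(Tr(S))\cong S$, the module $Tr(S)^{+}$ is $N$-subinjective, it is not injective (else $Tr(S)$ would be flat, hence projective, forcing $S$ projective), so by (P) it is pi-indigent and $N$ must be absolutely pure, hence injective by noetherianity. Thus $S$ is i-test, and hereditariness lifts i-testness from the submodule $S$ to $M$.

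Your route through $N^{++}$ cannot be repaired along the lines you sketch, for two reasons. First, as you note, $\Ext(M,N)=0$ does not imply $\Ext(M,N^{++})=0$ when $M$ is not finitely presented; the paper sidesteps exactly this finite-presentation obstruction by descending to the finitely presented submodule $S$ (and then needs hereditariness to climb back up). Second, even granting $\Ext(M,N^{++})=0$, this is information in the wrong direction: indigence of $N^{++}$ constrains which modules lie in $\underline{\mathfrak{In}}^{-1}(N^{++})$, whereas to conclude injectivity of $N$ one needs $N$ to lie in the subinjectivity domain of some non-injective pure-injective module --- which is what the transpose construction produces, and what your ``indigent witness'' $B$ with $\Ext(M,B)\neq 0$ does not. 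Your appeal to Goldie theory misidentifies the role of nonsingularity; it is not used to control pure-injectives but only to rule out the QF alternative. Your preliminary reductions (pi-indigent $=$ indigent over a left noetherian ring; flatness as Ext-orthogonality to pure-injectives) and the final ``in particular'' paragraph are correct, but the central claim remains unproved.
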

 \begin{proof}Recall that a module is flat if and only if its character module is injective. Hence, the subprojectivity domain of any finitely presented right module under the assumption (P), consists precisely of the flat modules by Corollary \ref{characterof sub pro and inj}(2).

 By Corollary \ref{nonflat ideals are f-indigent}, $R$  is  left fp-injective or $wD(R)\leq 1$ . In the former case, $R$ is QF-ring by \cite[Theorem 2.2]{faith}, yielding a contradiction because $R$ is left nonsingular.
 In the latter case, by the noethernity of $R$, $R$ is left hereditary. Let $M$ be any non flat left $R$-module. $M$ has a finitely presented submodule which is not projective by \cite[Corollary 3.49]{Rotman:HomologicalAlgebra}, say $S$. Assume that $\Ext(S, N)=0$ for some left $R$-module $N$. Note that $Tr(Tr(S))\cong S$. By Proposition \ref{charecteronscoherent} and the adjoint isomorphism, $Tr(S)^{+}$ is $N$-subinjective. But, by the property (P), $Tr(S)^{+}$ is pi-indigent, and hence $N$ is injective. Therefore, $S$ is i-test, and hence  $M$ is also i-test by \cite[Proposition 4.3]{f-indigent}. Furthermore, by the noethernity of $R$, $R$ is left n-saturated ring.

 \end{proof}

  The projective analog of indigent modules was considered in \cite{p-poor}, namely, p-indigent
 modules. A module $M$ is p-indigent if
 $\underline{\mathfrak{Pr}}^{-1}(M)$ consists precisely of the
 projective modules.
  \begin{thm}\label{MAiN THEOREM}Let $R$ be a right Noetherian ring. Assume that every non-injective pure-injective left
  R-module is indigent. Then $R\cong S\times T$,
  where $S$ is a semisimple artinian ring and $T$ is an indecomposable ring satisfying
  one of the following conditions:
  \begin{itemize}
  \item[(1)] $T$ is right n-saturated matrix ring over local $QF$-ring; or,
  \item[(2)] $T$ is hereditary Artinian serial ring with $J(T)^2=0$; or,
  \item[(3)] $T$ is $SI$-ring with $\Soc(_{T}T)=0$.
  \end{itemize}
  \end{thm}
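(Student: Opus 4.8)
The plan is to distill the hypothesis into the two working conditions ``$R$ is two-sided Noetherian'' and ``$R$ satisfies (P)'', then to peel off a semisimple Artinian direct factor, and finally to classify the remaining indecomposable factor by the dichotomy of Corollary~\ref{nonflat ideals are f-indigent}.

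First I would show that $R$ is two-sided Noetherian and satisfies (P). Consider the left analogue $\mathfrak{PI}_{\ell}=\prod T^{+}$ of $\mathfrak{PI}$, the product being taken over a complete set of finitely presented \emph{right} $R$-modules $T$; by the left version of Proposition~\ref{existness} it is a pi-indigent left module. Since $R$ is not von Neumann regular, Proposition~\ref{von neumann regular} rules out any injective pi-indigent module, so $\mathfrak{PI}_{\ell}$ is non-injective and hence, by hypothesis, indigent. Equating its two subinjectivity domains (indigent gives the injectives, pi-indigent gives the absolutely pure modules) shows that every absolutely pure left module is injective, i.e. $R$ is left Noetherian; together with the standing hypothesis this makes $R$ two-sided Noetherian. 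As absolutely pure $=$ injective now holds on the left, ``indigent'' and ``pi-indigent'' coincide for pure-injective left modules, so the hypothesis is precisely condition (1) of Proposition~\ref{P} and (P) holds.

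Next I would obtain $R\cong S\times T$. Since $R$ is right Noetherian it has finite right uniform dimension, so $1$ is a finite sum of orthogonal primitive central idempotents and $R$ is a finite product of indecomposable rings. I claim at most one factor is non-semisimple. Indeed, if $R=A\times B$ with both $A,B$ non-semisimple, then each is Noetherian and non-semisimple, hence non-regular, so $A$ carries a non-injective pure-injective left module $M$; regarded over $R$ (with $B$ acting as zero), $M$ is non-injective and pure-injective. For every left $B$-module $N$ one has $\Hom_{R}(N,M)=0$, so $M$ is $N$-subinjective; choosing a non-injective $B$-module $N_{0}$ (possible as $B$ is non-semisimple) gives a non-injective $R$-module in $\underline{\mathfrak{In}}^{-1}(M)$, so $M$ is not indigent, contradicting the hypothesis. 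Gathering the semisimple factors into a semisimple Artinian ring $S$ and the unique remaining factor into $T$ (which exists because $R$ is non-regular) yields $R\cong S\times T$ with $T$ indecomposable; clearly $T$ inherits two-sided Noetherianness and (P).

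Finally I would classify $T$ via Corollary~\ref{nonflat ideals are f-indigent} applied to $T$. In the branch where $T$ is left fp-injective, left Noetherianness makes $_{T}T$ injective, so $T$ is QF; the extra force of the indigent hypothesis --- which through the $\Tor$--$\Ext$ duality of Corollary~\ref{characterof sub pro and inj} turns subinjectivity statements about character modules into i-test statements --- should pin the number of simple modules down to one up to isomorphism and yield case (1), a right n-saturated matrix ring over a local QF-ring. In the complementary branch $wD(T)\le 1$, so Noetherianness gives $T$ hereditary, and I would split on the left socle: if $\Soc(_{T}T)=0$ then $T$ is left nonsingular and Lemma~\ref{theorem 5} applies, giving $T$ left hereditary and n-saturated with every non-flat module i-test, from which one upgrades to $T$ being an SI-ring, which is case (3); if $\Soc(_{T}T)\neq 0$ the socle and heredity force $T$ to be Artinian, and the indigent condition together with the classification of Artinian serial rings yields the serial structure with $J(T)^{2}=0$ of case (2).

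The hardest part will be these final refinements inside each branch: establishing the \emph{matrix-over-local-QF} and \emph{n-saturated} conclusions in case (1), the \emph{serial} and $J(T)^{2}=0$ conclusions in case (2), and the full \emph{SI} property in case (3). Unlike the earlier steps, these do not follow from the lemmas of this section alone; they require constructing explicit test modules whose subinjectivity domains detect the ring's structure and invoking the external structure theory of QF rings, of SI-rings, and of Artinian serial rings.
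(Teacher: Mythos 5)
Your opening reductions are sound and essentially match the paper: the use of the left analogue of $\mathfrak{PI}$ together with Proposition \ref{von neumann regular} to force left Noetherianity, and the observation that ``indigent'' and ``pi-indigent'' then coincide so that the hypothesis becomes condition (P), is exactly the paper's first step. Your second step --- splitting $R$ into finitely many indecomposable factors via central idempotents and showing at most one factor can be non-semisimple (a non-injective pure-injective module over one factor is vacuously subinjective relative to all modules over the other) --- is a genuinely different and self-contained way to produce $R\cong S\times T$; the paper never argues this directly, but instead receives the decomposition as output of the cited structure theorems (\cite[Theorem 4.1]{f-indigent} and \cite[Theorem 8]{structureofSIrings}).

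The gap is that the classification of $T$, which is the actual content of the theorem, is not proved; you flag it yourself as ``the hardest part,'' but the routes you sketch for it would not close. In the QF branch, ``should pin the number of simple modules down to one'' is not an argument; what is needed (and what the paper does) is: for a finitely generated non-projective right module $M$, note $M^{+}$ is pure-injective and non-injective, hence indigent, so by Corollary \ref{characterof sub pro and inj} together with \cite[Theorem 4]{Flatandprojectivecharactermodules} every right module in $\underline{\mathfrak{Pr}}^{-1}(M)$ is projective; thus every finitely generated right module is projective or p-indigent, and \cite[Theorem 4.1]{f-indigent} then yields case (1). In the hereditary branch, your claimed upgrade from Lemma \ref{theorem 5} (every non-flat module is i-test, $R$ n-saturated) to the $SI$ property is unsupported and does not follow: i-test statements say nothing about injectivity of singular modules. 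The paper's mechanism is entirely different: $T$ is not a right IF-ring (else it would be QF and hereditary, hence semisimple by \cite[Corollary 8.2]{faith}), so there is a non-flat injective right module $E$; then $E^{+}$ is nonsingular, pure-injective and non-injective, hence indigent, and since $\Hom(N,E^{+})=0$ for every singular $N$, every singular left module lies in $\underline{\mathfrak{In}}^{-1}(E^{+})$ and is therefore injective --- that is the $SI$ property. This idea is absent from your plan.

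Your initial split on $\Soc(_{T}T)$ is also backwards relative to what can be proved. The claim ``nonzero socle plus heredity forces $T$ Artinian'' requires a decomposition theorem for hereditary Noetherian rings (indecomposable ones are Artinian or prime) that you neither prove nor cite, and even granting Artinianness, the serial structure with $J(T)^{2}=0$ still needs the unique-singular-simple argument and \cite[Theorem 4.1]{f-indigent}. The paper avoids this split entirely: after establishing $SI$, it proves $T$ has a unique singular simple right module (via the subprojectivity duality of Corollary \ref{characterof sub pro and inj}), proves $T$ is a left FGPI ring, and then invokes Huynh's structure theorem \cite[Theorem 8]{structureofSIrings}, which \emph{produces} the dichotomy between the Artinian case (2) and the zero-socle case (3) as output. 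As it stands, your proposal is a correct outline of the easy reductions plus an alternative decomposition argument, but the theorem's core --- the case analysis for $T$ --- remains unproved.
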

  \begin{proof}
  If every pure-injective left $R$-module is injective, then $R$ is von Neumann regular. Assume that there exists at least one non-injective pure injective indigent left $R$-module.  Since each pure-injective left $R$-module is $N$-subinjective for every absolutely pure left module $N$, all absolutely left $R$-modules are injective, yielding that $R$ is a left noetherian ring by \cite[Theorem 3]{megibben}.

  By Corollary \ref{nonflat ideals are f-indigent}, $R$ is left absolutely pure or left semihereditary ring. In the former case $R$ is $QF$-ring by \cite[Theorem 2.2]{faith}.  Let $M$ be a non-projective finitely generated right $R$-module. For a right $R$-module $N$, by Corollary \ref{characterof sub pro and inj}, $N\in\underline{\mathfrak{Pr}}^{-1}(M)$ if and only if $N^{+}\in\underline{\mathfrak{In}}^{-1}(M^{+})$. By our assumption, $M^{+}$ is indigent, and thus $N^{+}$ is injective. By \cite[Theorem 4]{Flatandprojectivecharactermodules}, $N$ is projective. Then, every finitely generated right R-module is projective or p-indigent.
  This implies, by \cite[Theorem 4.1]{f-indigent}, that there is a ring direct sum $R\cong S\times T$, where $S$ is semisimple Artinian ring and  $T$ is an indecomposable ring which is right n-saturated matrix ring over a local QF-ring.

   In the latter case, $R$ is left hereditary ring by the noetherianity.
    $R$ is not right $IF$-ring, because otherwise so would be $R$ is left fp-injective by \cite[Corollary 8.2]{faith}.
   Then, by noethernity of $R$, $R$ is both $QF$ and hereditary, and this implying $R$ is semisimple artinian, a contradiction. Therefore, $R$ has a non-flat injective right $R$-module, say $E$. The character module $E^{+}$ is a nonsingular left module by  \cite[Theorem 2]{Flatandprojectivecharactermodules} and  \cite[Proposition 4.4]{f-indigent}. Note that $E^{+}$ is not injective, because otherwise so would be $E$ is flat by \cite[Theorem 2]{Flatandprojectivecharactermodules}, a contradiction. Then, $E^{+}$ is indigent nonsingular module by our assumption, and thus
    $R$ is a left $SI$-ring, i.e. every singular left module is injective.

    Now, we will show that $R$ has a unique singular
       simple right $R$-module up to isomorphism.  Let $A$ be a non-projective simple right $R$-module. By noethernity of $R$, it is finitely presented. For a right $R$-module $N$, by Corollary \ref{characterof sub pro and inj}, $N\in\underline{\mathfrak{Pr}}^{-1}(A)$ if and only if $N^{+}\in\underline{\mathfrak{In}}^{-1}(A^{+})$. Note that $A^{+}$ is not injective, otherwise, by \cite[Theorem 3.52]{Rotman:HomologicalAlgebra},  $A$ is flat. But $A$ is finitely presented, so $A$ becomes projective, a contradiction.  Let $B$ be a singular
          simple right $R$-modules. Assume that $A$ and $B$ are not isomorphic. Then,
          $A$ is clearly $B$-subprojective, and so $B$ is flat. Since $R$ is
          right Noetherian, $B$ is finitely presented, and so it is projective by \cite[Corollary 3.58]{Rotman:HomologicalAlgebra}, contradicting the singularity of  $B$. Thus, $R$ has a unique singular simple right module $A$ up to
          isomorphism.

We claim that every finitely generated left $R$-module is a direct sum of a projective module and an injective module, i.e., $R$ is a left $FGPI$ ring (see \cite{structureofSIrings}). Let $M$ be a finitely generated  left $R$-module. Recall that $R$ is left Noetherian ring,  and so $M$ is finitely presented. Consider the exact sequence $0\to Z(M)\to M\to M/Z(M)\to 0$. Since $R$ is left $SI$-ring, $M\cong Z(M)\oplus (M/Z(M))$. $M/Z(M)$ is a nonsingular  finitely presented module. Since $R$ is left hereditary left noetherian ring, it is flat by \cite[Proposition 4.4]{f-indigent}, and so projective.
 Therefore, $R$ is a left $FGPI$ ring   By \cite[Theorem 8]{structureofSIrings}, $R\cong U\times V$ where $U$ is left Artinian left $SI$,
   $V$ is left and right $SI$-ring with $\Soc(_{V}V ) = 0$. If $A$ is right $U$-module, then $V$ must be zero, and hence $R\cong U$ is left Artinian. Then, $R$ is Artinian by the right noetherianity. By \cite[Theorem 4.1]{f-indigent},  $R\cong S\times T$, where $S$ is a semisimple artinian ring and $T$ satisfy (2) in Theorem \ref{MAiN THEOREM}. In case $S$ is $V$-module, $U$ is semisimple and $V$ satisfy (3) in Theorem \ref{MAiN THEOREM}.
  \end{proof}

 \section{Ring whose simple modules are indigent or injective}\label{sec:last}

 In \cite{noyanenginalizade}, the authors interested with the structure of rings over which every non-injective  module is
 indigent. In this section, we deal with the structure of rings over which every non-injective  simple (respectively, singular, uniform, indecomposable) module is indigent. Recall that a ring $R$ is called right $V$-ring if all simple right R-modules are injective.

 \begin{thm}\label{every simple is indigent} Let $R$ be a ring. The following are equivalent.
 \begin{enumerate}
 \item[(1)]Every non-injective simple right $R$-module is indigent.
 \item[(2)] One of the following statements hold:
  \begin{enumerate}
\item[(i)] $R$ is a right V-ring; or,
 \item[(ii)]$R$ is right Hereditary righ Noetherian ring and, for any right $R$-module $M$, either $M$ is indigent or $Soc(M)=Soc(N)$, where $N$ is the  largest injective submodule of $M$; or,
 \item[(iii)] $R\cong S\times T$, where $S$ is semisimple Artinian ring and $T$ is an indecomposable matrix ring over a local QF-ring.
  \end{enumerate}
 \end{enumerate}
 \end{thm}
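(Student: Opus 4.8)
The plan is to prove the equivalence by characterizing when the class of non-injective simple modules being indigent forces the ring into one of three structural shapes. The natural strategy is to split on whether $R$ is a right $V$-ring. If every simple right module is injective, we are immediately in case (i), so assume from the outset that there exists at least one non-injective simple right $R$-module $A$; by hypothesis $A$ is indigent. The key leverage is that an indigent simple module has the smallest possible subprojectivity/subinjectivity behaviour, and in particular, since $A$ is $N$-subinjective only for injective $N$, the injective hull structure around $A$ tightly constrains the ring. First I would extract that all absolutely pure right modules must be injective: a simple module that is indigent cannot be $N$-subinjective for a proper absolutely pure $N$ unless that forces noetherianity, so I would invoke \cite[Theorem 3]{megibben} to conclude $R$ is right Noetherian. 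This is the pivotal reduction that lets us use finite presentation freely.

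Next I would analyze the socle/injective-hull dichotomy that appears in (ii). For an arbitrary right module $M$, consider the largest injective submodule $N \le M$. The idea is that any non-injective simple module embeds in $M/N$ or relates to $\Soc(M)$, and being indigent propagates upward: if a non-injective simple module sits inside $M$ in a way not absorbed by $N$, then $M$ itself should be forced to be indigent (using that subinjectivity domains shrink under the presence of an indigent ``subobject,'' analogous to \cite[Proposition 4.3]{f-indigent}). Conversely, when every non-injective simple is swallowed by the injective part, one gets $\Soc(M)=\Soc(N)$. To land in the hereditary Noetherian world of (ii), I would combine right noetherianity with a hereditary conclusion obtained from Corollary \ref{nonflat ideals are f-indigent}: if $R$ is not left absolutely pure (which would make it $QF$ and push us toward (iii)), then $wD(R)\le 1$, and with noetherianity this yields right hereditary.

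For case (iii) the plan mirrors the proof of Theorem \ref{MAiN THEOREM}. If $R$ fails to be hereditary in the sense above, then $R$ is left (hence two-sided, by noetherianity) absolutely pure, so $R$ is $QF$ by \cite[Theorem 2.2]{faith}. Then I would transfer indigence of the simple module $A$ to p-indigence of $A^{+}$ via Corollary \ref{characterof sub pro and inj}, concluding that every finitely generated right module is projective or p-indigent, and apply \cite[Theorem 4.1]{f-indigent} to obtain the ring decomposition $R\cong S\times T$ with $S$ semisimple Artinian and $T$ an indecomposable matrix ring over a local $QF$-ring. The converse direction (2)$\Rightarrow$(1) is handled case by case: in (i) simple modules are injective so the hypothesis is vacuous; in (iii) the $QF$/character-module machinery directly yields indigence of the non-injective simples; in (ii) the socle condition is engineered precisely so that a non-injective simple, not being absorbed by an injective submodule, has subinjectivity domain consisting only of injectives.

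The main obstacle I expect is case (ii) and the precise bookkeeping of the socle condition ``$\Soc(M)=\Soc(N)$ where $N$ is the largest injective submodule of $M$.'' Unlike the clean $QF$ situation of (iii), here the hereditary Noetherian setting permits many non-injective modules, and one must show that indigence of a single non-injective simple module is \emph{equivalent} to this global socle-matching statement rather than merely implied by it. The delicate point is the propagation of indigence: demonstrating that if some simple submodule of $\Soc(M)$ is non-injective and escapes the injective part $N$, then the subinjectivity domain of $M$ collapses to the injectives. I would address this by a socle-decomposition argument over the hereditary Noetherian ring, using that every injective module is a direct sum of indecomposable injectives and that $\Soc$ behaves well under such decompositions, together with the transfer results of Section \ref{sec:basic properties}.
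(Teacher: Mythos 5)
Your pivotal first reduction fails. You claim that indigence of a non-injective simple module $A$ forces all absolutely pure right modules to be injective, and hence that $R$ is right Noetherian by \cite[Theorem 3]{megibben}. But absolutely pure modules are guaranteed to lie in the subinjectivity domain of a module only when that module is \emph{pure-injective}; this is precisely the point of the paper's notion of pi-indigence, and simple modules over an arbitrary ring need not be pure-injective (the paper itself remarks, right after this theorem, that simple modules are pure-injective over \emph{commutative} rings, which is what makes Corollary \ref{theorem 2} work there). So from ``$A$ is indigent'' you cannot conclude that a proper absolutely pure $N$ lies in $\underline{\mathfrak{In}}^{-1}(A)$, and the appeal to Megibben has no premise to act on. For the same reason your structural dichotomy is inapplicable: Corollary \ref{nonflat ideals are f-indigent} and the transfer results (Corollary \ref{characterof sub pro and inj}) require knowing that the pure-injective character modules $I^{+}$, resp.\ $A^{+}$, are injective or (pi-)indigent. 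That is available under the hypothesis of Theorem \ref{MAiN THEOREM}, which is about all pure-injective modules, but not under the present hypothesis, which concerns only simple modules; $I^{+}$ and $A^{+}$ are essentially never simple, so the hypothesis says nothing about them.

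The paper's actual route is purely hom-theoretic and avoids all of this. After establishing (as you do) a unique non-injective simple $U$, it splits on whether $\Hom(E,U)=0$ for every injective $E$. If yes: for $K\leq N$ with $N$ injective, $\Hom(N/K,U)$ embeds in $\Hom(N,U)=0$, so $N/K\in\underline{\mathfrak{In}}^{-1}(U)$ and indigence forces $N/K$ injective, giving right hereditary; likewise $\Hom(\oplus_{i}E_i,U)\cong\prod_{i}\Hom(E_i,U)=0$ forces direct sums of injectives to be injective, giving right Noetherian --- this, not Megibben, is where noetherianity comes from. Then the Matlis decomposition $M=N\oplus M'$ (largest injective submodule plus reduced part) and a dichotomy on $\Hom(U,M')$ yield either $\Soc(M)=\Soc(N)$ or indigence of $M'$, the latter by combining closure of subinjectivity domains under homomorphic images over hereditary rings (\cite[Theorem 2.1]{on-indigent}) with reducedness of $M'$ to reach a contradiction; your proposed mechanism that indigence ``propagates upward from an indigent subobject'' is not valid in general --- what is used is only that a module with an indigent direct \emph{summand} is indigent. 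If instead $\Hom(E,U)\neq 0$ for some injective $E$, then $U$ is an epimorphic image of an injective module, so every projective module lies in $\underline{\mathfrak{In}}^{-1}(U)$ by \cite[Lemma 2.3]{p-indigent}; indigence then makes every projective injective, so $R$ is QF, and \cite[Theorem 3.1]{p-indigent} gives the decomposition in (iii) directly --- no character-module machinery and no mimicking of Theorem \ref{MAiN THEOREM} is needed or possible here.
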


 \begin{proof}$(1) \Rightarrow (2)$ Suppose every non-injective simple right $R$-module is indigent. If all simple modules are injective, then $R$ is a right  $V$-ring. Now suppose, there is a non-injective simple right $R$-module $U$. Then $U$ is indigent by the hypothesis. If $U'$ is any simple module which is not isomorphic to $U$, then $\Hom(U', U)=0$. That is, $U$ is $U'$-subinjective, and so $U'$ must be injective because $U$ is indigent. Thus the ring has a unique non-injective simple module up to  isomorphism, say $U$. Through injective modules, we have the following two cases:

 Case I. $\Hom(E,U)=0$ for each injective right $R$-module $E$. Then, $U$ is $N$-subinjective for any right $R$-module $N$ if and only if $\Hom(N,U)=0$. Let $N$ be a injective right $R$-module and let $K$ a submodule of $N$. $\Hom(N/K, U)=0 $, since $0\to \Hom(N/K, U)\to\Hom(N,U)=0$. Then $N/K$ is injective by indiginity of $U$. This implying $R$ is right Hereditary. Let $\{E_i\}_{i \in I}$ be an arbitrary family of injective modules. By  isomorphism $\Hom(\oplus_{i\in I}E_i, U)\cong\prod_{i\in I}\Hom(E_i,U)=0$, $U$ is $\oplus_{i\in I}E_i$-subinjective. As $U$ is indigent, $\oplus_{i\in I}E_i$ must be injective. So that the ring $R$ is Noetherian.

   Recall that a right module is called reduced if it has no nonzero injective submodule. Let $M$ be a right $R$-module. Since $R$ is right Hereditary and right Noetherian, $M=N \oplus M'$ for some $M', N \leq M$, where $M'$ is the reduced part of $M$ and  $N$ is the  largest injective submodule of $M$ (see \cite{matlis}). Note that, as $R$ is hereditary,  $\Hom(U', M')=0$ for every injective simple  right $R$-module. There are two cases for $M'$ by $U$: either $\Hom(U, M')=0$ or $\Hom(U, M')\neq 0$. In the former case, $Soc(M')=0$, and hence $Soc(M)=Soc(N)$. For the latter case, assume that $M'$ is not indigent. Then there is a non injective right $R$-module $K$ such that $M'$ is $K$-subinjective. $\Hom(K, U)\neq 0$, otherwise $K$ is injective by indiginity of $U$. Then $M'$ is $U$-subinjective by the fact that the subinjectivity domain of each right module is closed under homomorphic images over hereditary rings (see \cite[Theorem 2.1]{on-indigent}).
Since $R$ is right hereditary and $M'$ is reduced,  $\Hom(U, M')=0$, contradicting our assumption.
 Therefore, $M'$ is indigent, and so $M$ is.

 Case II. $\Hom(E,U)\neq 0$. Then $U$ is homomorphic image of an injective right $R$-module. By \cite[Lemma 2.3]{p-indigent}, $U$ is F-subinjective for every projective module $F$. As $U$ is indigent, every projective right $R$-module is injective, i.e. $R$ is QF-ring. Then, $U$ is a unique singular simple right  $R$-module. By \cite[Theorem 3.1]{p-indigent}, there is a ring direct sum $R\cong S\times T$, where $S$ is semisimple Artinian ring and $T$ is an indecomposable matrix ring over a local QF-ring.

 $(2) \Rightarrow (1)$ In case $R$ is a right V-ring, every simple right are module is injective, and
 we are done. For case 2-(ii), it is obvious. Assuming 2-(iii). Then $R$ has a unique  non-injective simple right $R$-module, say $U$. $U$ is indigent by \cite[Proposition 32]{noyanenginalizade}.

 \end{proof}
In the preceding Theorem, $2-(ii)$ is equivalent to say that $R$ is right Hereditary righ Noetherian ring with a unique non-injective  simple  module (up to
 isomorphism) and every reduced  right $R$-module $M$ with $Soc(M)\neq 0$ is indigent. Furthermore, I would point out that  a ring which has a non-injective simple indigent right module is right Noetherian.
 Note that over a commutative ring $R$ a simple module  $S$ is pure-injective and $S\cong S^{+}$ (see \cite{semsismple-chetman}). We have the following result by Theorem \ref{every simple is indigent} and \cite[Theorem 5.2]{f-indigent}.
 \begin{corollary}\label{theorem 2}Let $R$ be a commutative ring. The following statements are equivalent.
 \begin{enumerate}
 \item[(1)] Every simple module is injective or indigent.
 \item[(2)] $R$ is a V -ring, or $R\cong S\times T$, where $S$ is semisimple Artinian ring and $T$ is a DVR, or a local QF-ring.
 \end{enumerate}
 \end{corollary}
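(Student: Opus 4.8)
The plan is to deduce the corollary from Theorem~\ref{every simple is indigent} by specializing its three-case classification to the commutative setting, and to use \cite[Theorem 5.2]{f-indigent} to pin down the precise shape of the non-semisimple factor. Since $R$ is commutative, left and right coincide, so condition~(1) here is exactly condition~(1) of Theorem~\ref{every simple is indigent}: \emph{every simple module is injective or indigent} is the same as \emph{every non-injective simple module is indigent}. Thus $(1)$ is equivalent to the disjunction of the three cases (i)--(iii) of Theorem~\ref{every simple is indigent}, and the whole task is to show that, for commutative $R$, this disjunction collapses to the statement in $(2)$.

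For $(1)\Rightarrow(2)$ I would treat the three cases. Case (i) is already on the list: $R$ is a V-ring. In case (iii) one has $R\cong S\times T$ with $S$ semisimple Artinian and $T$ an indecomposable matrix ring $M_{n}(D)$ over a local $QF$-ring $D$; commutativity of $T$ forces $n=1$, so $T\cong D$ is a commutative local $QF$-ring, exactly the second alternative in $(2)$. The interesting case is (ii), where $R$ is hereditary Noetherian and, by the remark following Theorem~\ref{every simple is indigent}, has a unique non-injective simple module up to isomorphism. A commutative hereditary Noetherian ring is a finite direct product of Dedekind domains; the factors that are fields assemble into a semisimple Artinian ring $S$, and since every simple module over a non-field Dedekind domain is non-injective (injectives being divisible), the uniqueness of the non-injective simple leaves exactly one non-field factor $T$, which must then possess a single maximal ideal. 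Hence $T$ is a DVR and $R\cong S\times T$, the first alternative in $(2)$.

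For $(2)\Rightarrow(1)$ the V-ring case is immediate, since all simples are then injective. The local $QF$ case is precisely instance (iii) of Theorem~\ref{every simple is indigent} with $n=1$, so every non-injective simple is indigent. It remains to check that when $T$ is a DVR the unique simple $T/\mathfrak{m}$ is indigent; here I would invoke \cite[Theorem 5.2]{f-indigent}, using that over a commutative ring a simple module is pure-injective with $S\cong S^{+}$, so that the character-module duality of Corollary~\ref{characterof sub pro and inj} identifies the subinjectivity domain of $S$ with the subprojectivity condition already analyzed there.

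The main obstacle is case (ii): one must justify both that an indecomposable commutative hereditary Noetherian ring with a unique non-injective simple is genuinely a DVR, and, conversely, that the simple module over a DVR is honestly indigent rather than merely having a small subinjectivity domain. The structure theorem for commutative hereditary Noetherian rings together with the divisibility characterization of injectives over Dedekind domains handles the forward reduction, while the converse indigence statement is exactly what \cite[Theorem 5.2]{f-indigent} supplies; the socle condition in (ii) is what guarantees, in the DVR case, that the reduced modules with nonzero socle are indigent, thereby closing the loop.
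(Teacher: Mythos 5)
Your proposal is correct and takes essentially the same route as the paper: the paper's own proof is exactly the one-line observation that over a commutative ring a simple module is pure-injective with $S\cong S^{+}$, combined with Theorem~\ref{every simple is indigent} and \cite[Theorem 5.2]{f-indigent}, which are precisely the ingredients you invoke. Your explicit case analysis (the Dedekind-product argument handling case (ii) and the collapse of the matrix ring to $n=1$ in case (iii)) merely fills in details the paper leaves to the reader.
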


 \begin{proposition}\label{every indecomposable is indigent}Let $R$ be a right nonsingular  ring which is not right  $V$-ring.  The following are equivalent.
 \begin{enumerate}
 \item[(1)]Every simple module is indigent or injective.
 \item[(2)] Every uniform module with nonzero socle is indigent or injective.
 \item[(3)] Every indecomposable module with nonzero socle is indigent or injective.
 \end{enumerate}
 \end{proposition}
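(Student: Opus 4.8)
The plan is to prove the cycle of implications $(1)\Rightarrow(3)\Rightarrow(2)\Rightarrow(1)$, two of whose arrows are purely formal. The implications $(3)\Rightarrow(2)$ and $(2)\Rightarrow(1)$ follow at once from containments among the relevant classes of modules: every uniform module is indecomposable, so $(3)$ restricted to uniform modules with nonzero socle yields $(2)$; and every simple module is uniform with socle equal to itself, so $(2)$ specializes to $(1)$. No ring-theoretic input is needed for these two steps.

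The substance lies in $(1)\Rightarrow(3)$. Hypothesis $(1)$ is exactly condition $(1)$ of Theorem \ref{every simple is indigent}, so I would feed it into that theorem's trichotomy and discard the alternatives incompatible with the standing hypotheses. Since $R$ is assumed not to be a right $V$-ring, alternative $(i)$ is excluded outright. The key step is to rule out alternative $(iii)$, namely $R\cong S\times T$ with $T$ a matrix ring over a local $QF$-ring $Q$. Here I would argue from right nonsingularity: $T$ is nonsingular if and only if $Q$ is, and a right nonsingular right Artinian ring is semisimple; a local semisimple ring is a division ring, so $Q$ would be a division ring, $T$ simple Artinian, and $R$ semisimple Artinian, hence a right $V$-ring, a contradiction. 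Thus only alternative $(ii)$ survives: $R$ is right hereditary right Noetherian, and every right module $M$ is either indigent or satisfies $Soc(M)=Soc(N)$, where $N$ is the largest injective submodule of $M$.

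With this reduction, the conclusion is immediate. Let $M$ be an indecomposable right module with $Soc(M)\neq 0$. If $M$ is indigent we are done; otherwise $Soc(M)=Soc(N)$, and since $Soc(M)\neq 0$ we get $Soc(N)\neq 0$, so $N\neq 0$. Being injective, $N$ splits off as a direct summand of $M$, and indecomposability then forces $M=N$, so $M$ is injective, which is precisely the conclusion of $(3)$. I expect the only genuine obstacle to be the exclusion of alternative $(iii)$: one must recognize that nonsingularity prevents $Q$ from having nonzero radical, or equivalently that a local $QF$-ring which is not a division ring always carries a nonzero singular submodule. Once that point is settled, the splitting argument in case $(ii)$ is routine.
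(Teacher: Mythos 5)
Your proposal is correct in substance, and for the main implication $(1)\Rightarrow(3)$ it takes a genuinely different (and more economical) route than the paper. Both proofs start by feeding hypothesis $(1)$ into Theorem~\ref{every simple is indigent}, and both must discard alternatives (i) and (iii) --- the paper does this silently, simply asserting that $R$ is right hereditary, right Noetherian with a unique non-injective indigent simple module $U$, so your explicit exclusion of (iii) is an improvement in exposition. But from there the paths diverge. The paper does not use the dichotomy in condition (ii) as a black box; it re-runs the internal argument of the theorem's proof: for an indecomposable non-injective $M$ with $Soc(M)\neq 0$ it notes $M$ is reduced, that $\Hom(V,M)=0$ for every injective simple $V$ (hence $\Hom(U,M)\neq 0$), and then derives a contradiction from ``$M$ is not indigent'' using the fact that over right hereditary rings subinjectivity domains are closed under homomorphic images \cite{on-indigent}: $M$ would be $U$-subinjective, which over a hereditary ring would plant a nonzero injective image of $E(U)$ inside the reduced module $M$. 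Your argument instead quotes statement (ii) verbatim --- a non-indigent $M$ has $Soc(M)=Soc(N)$ with $N$ its largest injective submodule, so $Soc(M)\neq 0$ forces $N\neq 0$, $N$ splits off, and indecomposability gives $M=N$ injective. This is shorter and avoids any subinjectivity computation, at the price of leaning on the exact formulation of (ii); the paper's version, by re-proving indigence directly, in effect establishes the stronger fact that every reduced indecomposable module with nonzero socle is indigent.

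One intermediate claim you cite is false as stated: ``a right nonsingular right Artinian ring is semisimple.'' The upper triangular matrix ring $T_{2}(k)$ over a field is right hereditary (hence right nonsingular) and right Artinian, but not semisimple. What is true, and what your argument actually needs, is precisely the statement in your closing sentence: a local Artinian ring $Q$ with $J(Q)\neq 0$ has nonzero singular ideal, since $J(Q)$ is then an essential right ideal and any nonzero element of the last nonvanishing power of $J(Q)$ has right annihilator containing $J(Q)$. (Alternatively, note that $T$ in case (iii) is QF, hence right self-injective; a right self-injective right nonsingular ring is von Neumann regular, and a regular Artinian ring is semisimple.) Replace the false general lemma by either of these observations and your exclusion of alternative (iii) --- and with it the whole proof --- is sound.
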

  \begin{proof} \begin{sloppypar}In the general case there is the following chain of inclusions: $\text{simple modules} \subsetneqq \text {uniform modules} \varsubsetneq \text{indecomposable modules}$. Thus, $(3) \Rightarrow (2)$ and $(2) \Rightarrow (1)$ are obvious. To $(1) \Rightarrow (3)$, let $M$ be an indecomposable non injective module with $Soc(M)\neq 0$. Note that, by Theorem \ref{every simple is indigent}, $R$ is right hereditary and right Noetherian ring with unique non injective indigent simple module, say $U$. Then $M$ must be reduced. We will show that $M$ is indigent by mimicking second paragraph of the proof of (2-(ii)) in Theorem \ref{every simple is indigent}. Note that $\Hom(V, M)= 0$ for each simple module $V$, because otherwise $V$ is a direct summand of $M$, this would contradict the fact $M$ is indecomposable.
Therefore, $\Hom(U, M)\neq 0$, because otherwise $Soc(M)= 0$, contradicting its choice.\end{sloppypar}
 Assume that $M$ is not indigent. Then there is a non injective right $R$-module $K$ such that $M$ is $K$-subinjective. $\Hom(K, U)\neq 0$, otherwise $K$ is injective by indiginity of $U$. Then $M$ is $U$-subinjective by the fact that the subinjectivity domain of each right module is closed under homomorphic images over hereditary rings (see \cite[Theorem 2.1]{on-indigent}).
  Since $R$ is right hereditary and $M$ is reduced,  $\Hom(U, M)$ must be zero, a contradiction.
   Thus, $M$ is indigent.
 \end{proof}
 \begin{proposition}\label{every noninjective simple is singular}A noninjective simple module  over commutative ring is singular.
   \end{proposition}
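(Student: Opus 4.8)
The plan is to prove the contrapositive in its sharpest form: a simple module $S$ over a commutative ring $R$ that is \emph{not} singular is necessarily injective. Since $R$ is commutative, every simple module has the form $S\cong R/\mathfrak m$ for a maximal ideal $\mathfrak m$, and the annihilator of any nonzero element of $S$ equals $\mathfrak m$. Because $Z(S)$ is a submodule of the simple module $S$, it is either $0$ or $S$; thus $S$ is singular precisely when $\mathfrak m$ is an essential ideal of $R$, and nonsingular precisely when $\mathfrak m$ is not essential. So I would fix a simple nonsingular $S$ and aim to deduce injectivity, contradicting the hypothesis that $S$ is noninjective.

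First I would exploit non-essentiality. If $\mathfrak m$ is not essential, there is a nonzero ideal $I$ with $I\cap\mathfrak m=0$. Maximality of $\mathfrak m$ forces $I\not\subseteq\mathfrak m$, hence $I+\mathfrak m=R$, and together with $I\cap \mathfrak m=0$ this yields an internal direct sum $R=I\oplus\mathfrak m$ of $R$-modules, with $I\cong R/\mathfrak m\cong S$. Writing $1=e+f$ with $e\in I$, $f\in\mathfrak m$ then produces a (central, as $R$ is commutative) idempotent $e$ with $I=eR$ and $\mathfrak m=(1-e)R$, so that $R\cong eR\times(1-e)R$ as a ring direct product.

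The final step is to recognize $S$ as injective in this decomposition. Here $eR$ is a commutative ring with identity $e$ whose only ideals are $0$ and $eR$ (since $R=eR\oplus \mathfrak m$ and $\mathfrak m$ is maximal), that is, $eR\cong R/\mathfrak m$ is a field. Under the module-category splitting induced by $R\cong eR\times(1-e)R$, the module $S\cong eR$ lives entirely in the $eR$-factor, the complementary factor $(1-e)R$ annihilating it; and a module supported on one factor is $R$-injective if and only if it is injective over that factor. Since every module over the field $eR$ is injective, $S$ is $R$-injective. This contradicts the assumption, so $\mathfrak m$ must be essential and $S$ singular.

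I expect the main obstacle to be the last paragraph, namely arguing cleanly that a module concentrated on the field factor $eR$ is injective as an $R$-module; the cleanest route is the equivalence $\mathrm{Mod}\text{-}R\simeq \mathrm{Mod}\text{-}eR\times\mathrm{Mod}\text{-}(1-e)R$ together with the fact that injectives in a product category are exactly pairs of injectives, so that injectivity over the field $eR$ suffices. The earlier reduction to the ring direct product via the idempotent is routine once non-essentiality of $\mathfrak m$ has been translated into the splitting $R=I\oplus\mathfrak m$.
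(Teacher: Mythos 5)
Your proof is correct, but it takes a genuinely different route from the paper's. The paper starts from the dichotomy that a simple module is either projective or singular, and then rules out the projective case with the duality machinery used throughout the article: over a commutative ring a simple module $U$ satisfies $U\cong U^{+}$ (Cheatham--Smith), so projectivity (hence flatness) of $U$ makes $U^{++}\cong U^{+}$ injective, while $U$ embeds as a pure submodule of $U^{++}$; being pure-injective and absolutely pure, $U$ would then be injective, a contradiction. Your argument replaces all of this with elementary ring theory: nonsingularity of $S\cong R/\mathfrak{m}$ says exactly that $\mathfrak{m}$ is not essential, maximality upgrades a complement to an internal splitting $R=I\oplus\mathfrak{m}$, and commutativity turns the resulting idempotent into a ring decomposition $R\cong eR\times(1-e)R$ in which $eR\cong R/\mathfrak{m}$ is a field; since a module concentrated on one factor of a ring product is injective over $R$ precisely when it is injective over that factor, $S$ is injective, the same contradiction. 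What your approach buys is self-containedness: no character modules, no purity, no pure-injectivity, only the standard description of injectives over a finite product of rings. What the paper's approach buys is economy within its own framework: the facts it quotes (character modules of flat modules are injective, $U$ is pure in $U^{++}$, pure-injective absolutely pure modules are injective) are tools already in play elsewhere in the article. Note that both arguments use commutativity at the decisive point --- you for the centrality of the idempotent, the paper for $U\cong U^{+}$ --- as they must, since the statement fails for noncommutative rings.
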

   \begin{proof}
    Let $U$ be a noninjective simple module. Recall that every simple module is either projective or singular. Assume that $U$ is a projective module. Then $U^{++}$ is injective because $U\cong U^{+}$ and the character module of a flat module is injective. $U$ is a pure submodule of $U^{++}$  by \cite[Proposition 5.3.9]{Relativehomologicalalgebra}. Then, $U$ is an injective module by the fact that pure injective absolutely pure modules are injective, a contradiction. Therefore, $U$ is singular.
   \end{proof}
 \begin{proposition}\label{every singular is indigent}Let $R$ be a commutative nonsingular  ring which is not $V$-ring.  The following are equivalent.
  \begin{enumerate}
  \item[(1)]Every simple module is indigent or injective.
  \item[(2)] Every singular module is indigent or injective.
  \end{enumerate}
  \end{proposition}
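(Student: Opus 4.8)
The plan is to prove the two implications separately, treating $(2)\Rightarrow(1)$ as essentially formal and concentrating on $(1)\Rightarrow(2)$. For $(2)\Rightarrow(1)$ I would argue directly: given a simple module $U$, if it is injective there is nothing to prove, and if it is not injective then Proposition \ref{every noninjective simple is singular} makes it singular, so hypothesis $(2)$ forces it to be indigent or injective; being non-injective, it is indigent. Hence every simple module is indigent or injective.

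For $(1)\Rightarrow(2)$ I would first pin down the ring. Since $R$ is commutative, nonsingular and not a V-ring, condition $(1)$ together with Theorem \ref{every simple is indigent} places $R$ in case $(2)(ii)$: $R$ is hereditary Noetherian and, for every module $M$, either $M$ is indigent or $Soc(M)=Soc(N)$ with $N$ the largest injective submodule of $M$. Indeed, case $(i)$ is excluded by hypothesis, while case $(iii)$ would give $R\cong S\times T$ with $T$ a commutative matrix ring over a local $QF$-ring, hence a local $QF$-ring, whose singular ideal equals $J(T)$; nonsingularity then forces $J(T)=0$, so $T$ is a field, $R$ is semisimple Artinian, and $R$ is a V-ring, a contradiction. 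Applying the same reasoning to Corollary \ref{theorem 2} yields the sharper description $R\cong S\times T$ with $S$ semisimple Artinian and $T$ a DVR (the local $QF$ alternative being ruled out as above).

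With this in hand, let $M$ be a singular module. Decomposing $M=M_S\oplus M_T$ along $R\cong S\times T$ and using that every $S$-module is nonsingular, I get $M_S=0$, so $M$ is a torsion $T$-module. If $M$ is indigent we are done; otherwise the socle condition gives $Soc(M)=Soc(N)$, where $M=N\oplus M'$ decomposes $M$ into its largest injective submodule $N$ and its reduced part $M'$ (available via Matlis over the Noetherian ring $R$, as in the proof of Theorem \ref{every simple is indigent}). Comparing socles yields $Soc(M')=0$. Since $M'$ is a submodule of the singular module $M$, it is again a torsion $T$-module, and a nonzero torsion module over the DVR $T$ has nonzero socle: if $0\ne x\in M'$ and $n$ is least with $\pi^{n}x=0$, then $0\ne \pi^{n-1}x\in Soc(M')$. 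Hence $M'=0$ and $M=N$ is injective, completing the argument.

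The only real content is the structural reduction forcing $T$ to be a DVR: once the local $QF$ alternative is excluded by nonsingularity, the one-line socle computation over $T$ combined with the socle dichotomy of Theorem \ref{every simple is indigent} does all the work. I therefore do not anticipate a genuine obstacle beyond correctly assembling these inputs, so the main care is in justifying the case analysis that lands $R$ in the DVR case rather than in $(i)$ or $(iii)$.
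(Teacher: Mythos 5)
Your proof is correct, but your route through $(1)\Rightarrow(2)$ is genuinely different from the paper's. The paper never invokes Corollary \ref{theorem 2} or the decomposition $R\cong S\times T$ with $T$ a DVR: it works only with the abstract consequence of Theorem \ref{every simple is indigent} that $R$ is hereditary Noetherian with a unique non-injective (necessarily singular, by Proposition \ref{every noninjective simple is singular}) simple module $U$; it reduces to the case of a reduced non-injective singular module $M$, quotes the C-ring property of hereditary Noetherian rings (every singular module has nonzero socle, via McConnell--Robson) to conclude $\Hom(U,M)\neq 0$, and then re-runs the subinjectivity argument of Proposition \ref{every indecomposable is indigent} (using closure of subinjectivity domains under homomorphic images over hereditary rings) to get a contradiction from the assumption that $M$ is not indigent. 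You instead treat the socle dichotomy of case 2-(ii) of Theorem \ref{every simple is indigent} as a black box --- thereby outsourcing the subinjectivity argument to that theorem's proof --- and you replace the C-ring citation by the explicit commutative structure $R\cong S\times T$ with $T$ a DVR; your exclusion of the local QF alternative is sound, since a commutative local QF ring $T$ which is not a field has $Z(T)=J(T)\neq 0$, and your elementary computation that a nonzero torsion module over a DVR has nonzero socle ($\pi^{n-1}x\in \Soc(M')$) then does the work. What your approach buys: no external C-ring reference, no repetition of the subinjectivity argument, and the sharper conclusion that a non-indigent singular module is outright injective (its reduced part vanishes). What the paper's approach buys: it avoids the commutative structure theorem entirely, so it is literally the same mechanism as Proposition \ref{every indecomposable is indigent} and would survive in settings where no DVR description of the ring is available. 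The $(2)\Rightarrow(1)$ direction is identical in both: it is exactly the paper's one-line appeal to Proposition \ref{every noninjective simple is singular}.
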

   \begin{proof} $(1) \Rightarrow (2)$ By  Theorem \ref{every simple is indigent},  $R$ is  hereditary Noetherian ring with unique noninjective indigent simple module, say $U$. By Proposition \ref{every noninjective simple is singular}, $U$ is (unique) singular. Let $M$ be a noninjective singular module. Note that a module is indigent if it has an indigent direct summand. We may assume, without loss of generality, that $M$ is reduced. Since  $R$ is hereditary Noetherian ring, $R$ is C-ring, i.e. every singular module has nonzero socle (see \cite[Proposition 5.4]{noetherhereditarycring}).  Then $\Hom(U, M)\neq 0$. By mimicking second paragraph of the proof of Proposition \ref{every indecomposable is indigent}, it is obtained that $M$ is indigent. $(2) \Rightarrow (1)$ follows by Proposition \ref{every noninjective simple is singular}.
  \end{proof}

 In \cite[Theorem 14]{noyanenginalizade}, it is shown that  $R$ is a QF-ring isomorphic to a matrix ring over a local ring
  if and only if R is right Artinian with homogeneous right socle containing $Z(R_{R})$ essentially, and
 every simple right module is indigent or injective. We have the following result by Theorem \ref{every simple is indigent}.
 \begin{corollary}Let $R$ be a ring which is not right $V$-ring.  Assume that $Z(R_{R})\neq 0$.  The following are equivalent.
 \begin{enumerate}
 \item[(1)]Every simple right $R$-module is indigent or injective.
 \item[(2)]  $R\cong S\times T$, where $S$ is semisimple Artinian ring and $T$ is an indecomposable matrix ring over a local QF-ring.
 \item[(3)]  The left-hand version of $(1)$.
 \end{enumerate}
 \end{corollary}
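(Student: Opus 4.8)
The plan is to read the conclusion off Theorem~\ref{every simple is indigent} by using the two standing hypotheses to discard the alternatives (i) and (ii) appearing there, and then to recover the left-handed statement (3) from the left--right self-duality of condition (2). The technical heart of the argument is the auxiliary claim that \emph{a right hereditary, right Noetherian ring is right nonsingular}. I would prove this by a trace-ideal computation. Write $Z=Z(R_R)$. Since $R$ is right Noetherian, $Z$ is nilpotent, say $Z^{n}=0$ (a standard fact for Noetherian rings), and since $R$ is right hereditary the right ideal $Z$ is projective. Because $Z$ is a singular module, every homomorphism $Z\to R_R$ has singular, hence again $Z$-valued, image; thus the trace ideal $T=\sum_{f\in\Hom_R(Z,R)}f(Z)$ satisfies $T\subseteq Z$. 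But for a projective module one has the idempotence $T^{2}=T$ and the identity $ZT=Z$, so $T\subseteq Z$ with $Z$ nilpotent forces $T=T^{n}\subseteq Z^{n}=0$, whence $Z=ZT=0$.

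For $(1)\Rightarrow(2)$ I would invoke Theorem~\ref{every simple is indigent}: condition (1) places $R$ in one of the cases (i), (ii), (iii) there. The hypothesis that $R$ is not a right $V$-ring excludes (i). If (ii) held, then $R$ would be right hereditary and right Noetherian, hence right nonsingular by the lemma above, contradicting $Z(R_R)\neq0$; so (ii) is excluded as well. This leaves (iii), which is precisely (2). The converse $(2)\Rightarrow(1)$ is immediate, since (2) is exactly case (iii) of Theorem~\ref{every simple is indigent}, whose $(2)\Rightarrow(1)$ direction yields that the unique non-injective simple module is indigent (cf. \cite[Proposition~32]{noyanenginalizade}).

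It remains to fold in (3), and here I would exploit that condition (2) is left--right self-dual: if $R\cong S\times T$ with $S$ semisimple Artinian and $T$ an indecomposable matrix ring over a local $QF$-ring, then $R^{\mathrm{op}}$ has the same form, since ``semisimple Artinian'', ``local'', ``$QF$'' and ``matrix ring over'' are all self-dual notions. Consequently $(2)\Rightarrow(3)$ follows by applying the unconditional implication $(2)\Rightarrow(1)$ to $R^{\mathrm{op}}$, whose right modules are the left $R$-modules. For $(3)\Rightarrow(2)$ I would run the same analysis on $R^{\mathrm{op}}$: the left-handed form of Theorem~\ref{every simple is indigent} forces $R$ to be a left $V$-ring, or left hereditary and left Noetherian, or of the form (2). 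The main obstacle is exactly to discard the first two left-handed alternatives using only the right-handed hypotheses $Z(R_R)\neq0$ and ``$R$ not a right $V$-ring'', since singularity is not a left--right symmetric notion. I expect to clear this by a compatibility check: a matrix ring over a local $QF$-ring is $QF$ and non-semisimple, so among the three structures case (2) is the only one that is singular, whereas a left $V$-ring and a left hereditary left Noetherian ring are (left) nonsingular and, together with the socle-induced $S\times T$ splitting (in the spirit of Proposition~\ref{every indecomposable is indigent} and \cite[Theorem~14]{noyanenginalizade}), should force $Z(R_R)=0$, contradicting the hypothesis. Making this last step airtight is where I would concentrate the care.
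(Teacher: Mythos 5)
Your handling of $(1)\Leftrightarrow(2)$ is correct and is essentially the argument the paper leaves implicit: the non-$V$-ring hypothesis kills case (i) of Theorem \ref{every simple is indigent}, right nonsingularity of right hereditary right Noetherian rings kills case (ii), and the converse is the unconditional implication (iii)$\Rightarrow$(1) of that theorem. Your auxiliary lemma is true and your trace-ideal proof of it is valid, but it is needlessly heavy: right hereditary alone (no Noetherian hypothesis, no nilpotency of the singular ideal) suffices, since for $x\in Z(R_R)$ the cyclic right ideal $xR\cong R/\mathrm{r.ann}(x)$ is projective, so $\mathrm{r.ann}(x)$ is a direct summand of $R_R$ which is also essential, hence equals $R$ and $x=0$. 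The implication $(2)\Rightarrow(3)$ via the left--right symmetry of condition (2) is also fine.

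The genuine gap is exactly where you flag it: $(3)\Rightarrow(2)$. Your plan is to apply the left-handed version of Theorem \ref{every simple is indigent} and discard its cases (i) and (ii), but the only available hypotheses --- $R$ is not a right $V$-ring and $Z(R_R)\neq 0$ --- are right-handed, while the cases to be excluded are left-handed. The ``compatibility check'' you sketch asserts that a left $V$-ring, or a left hereditary left Noetherian ring, is nonsingular and that this ``should force $Z(R_R)=0$''; but what those hypotheses yield is \emph{left} nonsingularity, $Z({}_RR)=0$, and nonsingularity is not a left--right symmetric property: there are standard examples (e.g. the triangular ring $\left(\begin{smallmatrix}\Z & \Z/2\Z\\ 0 & \Z/2\Z\end{smallmatrix}\right)$ and its opposite) of rings nonsingular on exactly one side. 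So $Z({}_RR)=0$ in no way contradicts $Z(R_R)\neq 0$. Likewise ``not a right $V$-ring'' does not exclude the left-handed case (i), since left $V$-rings need not be right $V$-rings (Michler--Villamayor). Hence neither left-handed alternative is eliminated by your argument, and $(3)\Rightarrow(2)$ remains unproven; to close it you would need a genuinely new argument showing that a left $V$-ring, or a left hereditary left Noetherian ring of type (ii), cannot have $Z(R_R)\neq 0$ while failing to be a right $V$-ring --- a one-sided-to-other-sided transfer that your proposal (and, it should be said, the paper itself, which states the corollary without proof) does not supply.
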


\centerline{\bf{Acknowledgments}} %I also wish to thank the anonymous referee for careful reading
%and valuable suggestions for improvement.
This work was supported by Research Fund of the \c{C}ukurova University (Project Number:10871).
\bibliography{mybibfile}

\end{document}